\documentclass{article}
\usepackage{amsfonts}
\usepackage{amsmath}
\usepackage{graphicx}
\usepackage{xcolor}
\usepackage{subcaption}
\usepackage{ulem}
\usepackage{natbib}

\usepackage[a4paper, left=3cm, right=3cm, top=4cm, bottom=4cm]{geometry}

\setcounter{MaxMatrixCols}{10}

\newtheorem{theorem}{Theorem}

\newtheorem{definition}[theorem]{Definition}
\newtheorem{proposition}[theorem]{Proposition}

\newenvironment{proof}[1][Proof]{\noindent \textbf{#1.} }{\  \rule{0.5em}{0.5em}}

\begin{document}

\title{\textbf{Robust tests for log-logistic models based on minimum density power divergence estimators}}
\date{}
\date{}
\author{Felipe, A., Jaenada, M., Miranda, P. and Pardo, L.}
\maketitle

\begin{abstract}
The log-logistic distribution is a versatile parametric family widely used across various applied fields, including survival analysis, reliability engineering, and econometrics.  When estimating parameters of the log-logistic distribution, hypothesis testing is necessary to verify assumptions about these parameters. The Wald test and Rao test provide formal methods for testing hypotheses about these parameters. However, these test statistics are not robust, and their rejection decisions may be affected by data contamination. In this paper we develop new families of Wald-type  test statistics and Rao-type test statistics based on minimum density power divergence estimators (MDPDEs) for the parameters of the log-logistic distribution. These new families generalize the Wald and Rao test statistics,  inheriting the robustness properties from the MDPDEs and thus addressing the lack of robustness of the classical tests.  Explicit expressions for the test statistics under the log-logistic model for both simple and composite null hypotheses are derived, and their properties are analyzed in detail. An extensive simulation study empirically demonstrates  the robustness of these families and compares their performance with the classical methods.
\end{abstract}

{\bf Keywords:} Log-logistic distribution, minimum density power divergence estimator, Wald-type tests, Rao-type tests.

\section{Introduction \label{sec:1}}

The log-logistic distribution is a continuous probability distribution commonly used in survival analysis, reliability engineering, and econometrics. Indeed, the Fisk distribution, a special case of the log-logistic family, is widely used in economics to model income distribution, wealth distribution, and price variations.  The log-logistic distribution provides a flexible model for right-skewed data, which is common in economic studies where a small proportion of individuals or entities hold a large share of wealth or income. 
Further, the log-logistic distribution is also useful in modeling lifetime data, where the hazard function exhibits a non-monotonic shape, allowing for an initial increase followed by a decrease over time.

The log-logistic distribution can be defined as the logarithmic transformation of a logistic distribution, similar to how the log-normal distribution is derived from a normal distribution (see \cite{bai74}). The probability density function (pdf) of the log-logistic distribution is given by
\begin{equation}
f_{\alpha ,\beta }(x)=\frac{\beta \alpha ^{\beta }x^{\beta -1}}{\left( x^{\beta }+\alpha ^{\beta }\right) ^{2}},\text{ \qquad }x>0\text{ \ }(\alpha ,\beta >0)  \label{W1.1}
\end{equation}
where $\alpha >0$ is a scale parameter and  $\beta >0$ is a shape parameter. For the log-logistic distribution, the scale $\alpha$ represents the median of the distribution. The distribution is unimodal for $\beta >1.$ Therefore, the parameter space is restricted to 
$
	\Theta =\left \{ \left( \alpha ,\beta \right) /\alpha ,\beta >0\right \} = \mathbb{R}^{+}\times \mathbb{R}^{+}.
$

Moreover, the log-logistic distribution is a special case of Burr's type-XII distribution (see \cite{bur42}), and also a special case of the ``Kappa distributions'' (see \cite{mijo73}) widely used in hydrology, meteorology, actuarial science. The log-logistic distribution's properties and characteristics were first studied in \cite{shda63}, where its flexibility and suitability for modeling right-skewed data was evidenced. For more detail about this distribution, we refer to the works of \cite{tajo82} and \cite{bama87}.

Due to its strong suitability for modeling right-skewed data, it has been widely applied across various fields, including Hydrology, Economics, Survival, and Quality Control, among many others. For instance,  \cite{shmitr88} applied the log-logistic distribution to model precipitation data from some Canadian areas. In \cite{asma03}  the superior fit of the log-logistic distribution compared to the lognormal, Weibull, and other extreme Type-I distributions for modeling maximum annual streamflow data was shown.  \cite{suto15} considered the log-logistic distribution to model water demand data. 
Other interesting studies in hydrological analysis that apply the log-logistic distribution include the works of \cite{cun89, gucu92, haho93, rostsi02}, and references therein. For Economics applications, we refer to \cite{klko03} and the original paper of Fisk \cite{fis61}, where it is shown that the log-logistic distribution is an appealing choice for modeling the distribution of wealth or income. Further, the log-log distribution has been widely applied in Survival Analysis to model for events in which the rate initially increases and then decreases, see e.g. the paper of Collet \cite{col03} where it is applied to cancer mortality after treatment. Finally, it is possible to develop control charts and the corresponding control limits based on log-logistic distribution the same way as Shewhart-charts, see \cite{kavasr06}.

Different estimation methods for the log-logistic distribution parameters, $\alpha$ and $\beta,$ have been considered in the literature. For example,  \cite{bamapu87} developed BLUE (Best linear unbiased estimators) for both parameters. \cite{kasr02} studied the maximum likelihood estimator (MLE) of the scale parameter $\alpha$ for known shape parameter $\beta,$ and ater, \cite{che06} studied the estimation of the shape parameter for known scale.
 
From maximum likelihood theory, it can be seen that the MLEs of $\alpha$ and $\beta $ are given by the solution of the following system of the two equations
\begin{equation*}
\left \{
\begin{array}{l}
-\dfrac{n\beta }{\alpha }+\dfrac{2\beta }{\alpha }\sum_{i=1}^{n}\log x_{i}-2\sum_{i=1}^{n}\log \left( 1+\left( \frac{x_{i}}{\alpha }\right) ^{\beta }\right) =0 \\
-\dfrac{n}{\beta }-n\log \alpha +\sum_{i=1}^{n}\log x_{i}-2\sum_{i=1}^{n}\left( \frac{x_{i}}{\alpha }\right) ^{\beta }\log \frac{x_{i}}{\alpha }\left( 1+\left( \frac{x_{i}}{\alpha }\right)^{\beta }\right) ^{-1}=0.
\end{array}
\right.
\end{equation*}

Because no closed-form solution of the MLEs can be found, numerical methods such as the Newton-Raphson or quasi-Newton algorithms are needed to obtain approximated MLEs. Some available optimization methods to obtain the MLEs can be found e.g. in \cite{prflteve86, lan99}.

In terms of efficiency, the asymptotic properties of the MLE are well established, as it is a Best Asymptotically Normal (BAN) estimator.
However, it is also well-known that the MLE has two important drawbacks: It might be a biased estimator for small or moderate sample sizes, and it might be heavily affected by data contamination. To correct the bias on the estimation, several approaches have appeared in the literature, see e.g. \cite{giles13, giles16} 
\cite{reath2018, wang2017}. 
This method reduces the bias of MLEs to the second order of magnitude by subtracting the estimated bias from the MLEs. On the other hand, regarding the lack of robustness, the use of minimum distance estimators based on the density power divergence (DPD) for the log-logistic distribution were proposed as a robust alternative to the MLE in \cite{mawapa23} and \cite{fejamipa24}. The minimum density power divergence estimator (MDPDE) family generalizes the MLE by a tuning parameter $\tau$ controlling the trade-off between efficiency and robustness on the estimation.

Since the log-logistic distribution is widely used in different applied fields, defining statistical test statistics allow researchers to determine assess specific hypotheses related to lifetime characteristics, failure rates, and risk factors. In this paper, robust test statistics based on minimum distance estimator for the log-logistic parameters are developed.
Particularly,
the Rao test (also known as the Score test) and the Wald test are two fundamental statistical tests based on the MLE used for hypothesis testing in parametric models. They are flexible and allow testing a wide range of hypotheses related to model parameters. However, there is no definitive choice between the two, as both have proven to perform better in different applications depending on the context. 
In this work  we aim to develop Wald and Rao-type test statistics based on the MDPDE, thereby extending the classical Wald and Rao tests to a broader family of statistics that remain resilient in the presence of outliers, and effectively addressing the robustness issue of the MLE. 
%

The rest of the paper is organized as follows. In Section \ref{sec:2} some key properties of the MDPDEs are presented. Section 3 deals with Wald-type tests, both for simple and composite null hypothesis. Section 4 presents the corresponding Rao-type tests for the same hypothesis tests. In Section 5 an extensive simulation study is carried out, where the behavior of test statistics based on MLE is compared to the robust statistics based on MDPDEs in terms of efficiency and robustness. 

\section{The Minimum Density Power Divergence Estimator \label{sec:2}}

Let $\mathcal{G}$ denote the set of all distributions having densities with respect to a dominating measure (generally the Lebesgue measure or the counting measure). Divergence measures are functionals aiming to measure similarity or discrepancy between two probability distributions on $\mathcal{G}.$ Formally, a divergence measure $d$  on two probability distributions, $g$ and $f,$ is a functional satisfying two properties: $d(g,f)\geq 0$ for all $g, f\in \mathcal{G}$, and $d_{\tau }(g,f)$ equals zero if and only if the two densities $g$ and $f$ are identically equal almost surely. Many divergence measures have been proposed in the literature (\cite{par06}). 
In this paper, we will consider the family of {\it density power divergences}, originally defined in \cite{bahahjjo98}.

Given any two densities $g$ and $f$ in $\mathcal{G}$, the DPD between them is defined as the function of a non-negative tuning parameter $\tau$ as follows:
\begin{equation}
d_{\tau }(g,f)=\int \left\{ f^{1+\tau }(x)-\left( 1+\frac{1}{\tau }\right) f^{\tau }(x)g(x)+\frac{1}{\tau }g^{1+\tau }(x)\right\} dx.  \label{W1.1411}
\end{equation}
The DPD can be extended at $\tau =0$ from the general case by taking the continuous limit as $\tau \rightarrow 0$, and in this case,
\begin{equation*}
d_{0}(g,f)=\lim_{\tau \rightarrow 0}d_{\tau }(g,f)=\int g(x)\log f(x)dx
\end{equation*}
coincides with the classical Kullback-Leibler divergence, yielding the maximum likelihood from an information theory approach. For more details about Kullback-Leibler divergence and it properties, see \cite{par06}. 
On the other hand, for $\tau =1,$ the square of the standard $L_{2}$ distance between $g$ and $f$ is obtained. More details about properties of DPD can be found in \cite{bahahjjo98}.

Now, consider a set of data coming from an unknown distribution $G$ and corresponding density function  $g.$ 
To model the underlying distribution, a parametric family 
$\mathcal{F}_{\boldsymbol{\theta}} = \{f_{\boldsymbol{\theta}} | \boldsymbol{\theta \in \Theta }\subset \mathbb{R}^{k}\},$  is assumed. Therefore, we are interested in estimating the distribution parameter $\boldsymbol{\theta}$ such that the assumed parametric distribution is as closely as possible to the true distribution of the data.
To measure such closeness, a divergence measure will be used. 

For a given divergence $d,$ the {\it minimum divergence functional} at $G$, denoted by $\boldsymbol{T}_{d}(G)$, is defined as
\begin{equation*}
d(g,f_{\boldsymbol{T}_{\beta }(G)})=\min_{\theta \in  \Theta }d(g,f_{\boldsymbol{\theta }}).
\end{equation*}

Importantly, the  statistical functional $\boldsymbol{T}_{d}(G)$ based on Kullback-Leibler divergence between $g$ and $f_{\boldsymbol{\theta }}$ is equivalent to the  maximum log-likelihood functional.
But of course, we can consider other different divergence measures providing functionals with certain desirable properties. In this paper, we adopt the DPD approach for the sake of robustness. 

The {\it minimum DPD functional} at $G$, denoted by $\boldsymbol{T}_{\tau}(G)$, is defined as
\begin{eqnarray}
d_{\tau }(g,f_{\boldsymbol{T}_{\beta }(G)}) 
  & = &  \min_{\theta \in  \Theta } \int \left\{ f_{\boldsymbol{\theta}}^{1+\tau }(x)-\left( 1+\frac{1}{\tau }\right) f_{\boldsymbol{\theta}}^{\tau }(x)g(x) \right\} dx. \label{W1.1412}
\end{eqnarray}
We have removed the last term in Eq. (\ref{W1.1411}) as it does not depend on the parameter $\boldsymbol{\theta}.$

In practice, the underlying cdf $G$ and pdf $g$ of the population are unknown, and so they need to be estimated from a sample. A natural choice for estimating $G$ is to consider the empirical cdf, $G_n,$ associated to the random sample $X_{1},\ldots ,X_{n}.$ 
However, there is no straightforward  method for empirically estimating the pdf of the distribution. Kernel-based estimations derived from the sample may serve as a viable approach, but they will depend on the kernel choice.
Conveniently, for the minimum DPD functional it is sufficient to estimate the underlying cdf $G$, as the pdf appears only as a linear term in the equation Eq. (\ref{W1.1412}), and so can be rewritten as 
\begin{eqnarray}
	d_{\tau }(g,f_{\boldsymbol{T}_{\beta }(G)}) 
	& = &  \min_{\theta \in  \Theta } \int \left\{ f_{\boldsymbol{\theta}}^{1+\tau }(x)-\left( 1+\frac{1}{\tau }\right) f_{\boldsymbol{\theta}}^{\tau }(x)dG(x) \right\} dx. 
\end{eqnarray}
 Therefore,  the {\it minimum DPD estimator} (MDPDE) of $\boldsymbol{\theta}$ is defined by
\begin{equation}
\widehat{\boldsymbol{\theta }}_{\tau }=\boldsymbol{T}_{\tau }(G_{n}) = \arg \min_{\theta \in  \Theta } \left\{ \int  f_{\boldsymbol{\theta}}^{1+\tau }(x) dx-\left( 1+\frac{1}{\tau }\right) \sum_{i=1}^n  f_{\boldsymbol{\theta}}^{\tau }(X_i) \right\}.  \label{W1.1413}
\end{equation}

Now, minimizing the estimated is equivalent to maximize the surrogate function
\begin{equation}
H_{n,\tau }(\boldsymbol{\theta )=}\left( 1+\frac{1}{\tau }\right) \frac{1}{n}\sum_{i=1}^{n}f_{\boldsymbol{\theta }}^{\tau }(X_{i})-\int f_{\boldsymbol{\theta }}^{1+\tau }(x)dx-\frac{1}{\tau },  \label{W1.1414}
\end{equation}
because $\frac{1}{\tau}$ does not depend on the parameter$\boldsymbol{\theta.}$ 
Therefore, we can equivalently define the MDPDE of $\boldsymbol{\theta}$, $\widehat{\boldsymbol{\theta }}_{\tau}, $ as 
\begin{equation*}
	\widehat{\boldsymbol{\theta }}_{\tau }=\arg \max_{\boldsymbol{\theta }\in \Theta }H_{n,\tau }(\boldsymbol{\theta }).
\end{equation*}

From this equivalent definition, it becomes evident that
\begin{equation*}
H_{n,0}(\boldsymbol{\theta })=\lim_{\tau \rightarrow 0}H_{n,\beta }(\boldsymbol{\theta })=\frac{1}{n}\sum_{i=1}^{n}\log f_{\boldsymbol{\theta }}(X_{i}),
\end{equation*}
and so the MDPDE at $\tau=0$ is given by
\begin{equation}
	\widehat{\boldsymbol{\theta}}_0=\arg \max_{\boldsymbol{\theta }\in \Theta }H_{n,0}(\boldsymbol{\theta })=\arg \max_{\boldsymbol{\theta }\in \Theta }\frac{1}{n}\sum_{i=1}^{n}\log f_{\boldsymbol{\theta }}(X_{i}), \label{W1.1415}
\end{equation}%
coincides with the MLE.
Remark that the objective function $H_{n,\tau }(\boldsymbol{\theta })$ depends on the DPD tuning parameter $\tau$ which will controls the trade-off between robustness and efficiency; the larger $\tau$ is, the most robust but less efficient the resulting estimator will, and conversely, the smaller $\tau$, the more efficient but less robust. That is, the robustness of the estimator will improve when $\tau$ increases, but efficiency will improve with $\tau.$  Thus, the most efficient (and less robust) estimator of the MDPDE family is the MLE.

 \cite{fejamipa24} obtained the explicit expression of Eq. (\ref{W1.1414})
under the log-logistic distribution with parameters $\boldsymbol{\theta}= (\alpha , \beta),$ given as follows
\begin{eqnarray*}
H_{n,\tau }(\alpha ,\beta \boldsymbol{)} &\boldsymbol{=}&\left( 1+\frac{1}{\tau }\right) \frac{1}{n}\sum_{i=1}^{n}f_{\boldsymbol{\theta }}^{\tau }(X_{i})-\int_{0}^{\infty }f_{\boldsymbol{\theta }}^{1+\tau }(x)dx-\frac{1}{\tau } \\
&=&\frac{1}{n}\sum_{i=1}^{n}\left\{ \left( 1+\frac{1}{\tau }\right) \frac{\beta ^{\tau }\alpha ^{\tau \beta }X_{i}^{\tau \left( \beta -1\right) }}{\left( X_{i}^{\beta }+\alpha ^{\beta }\right) ^{2\tau }}-\left( \frac{\beta }{\alpha }\right) ^{\tau }B\left( \frac{\beta \tau +\tau +\beta }{\beta },\frac{\beta \tau -\tau +\beta }{\beta }\right) -\frac{1}{\tau }\right\}
\end{eqnarray*}
for $\tau >0$ and
\begin{equation*}
H_{n,0}(\beta ,\alpha \boldsymbol{)=}\frac{1}{n}\sum_{i=1}^{n}\left\{ \log \beta +\beta \log \frac{X_{i}}{\alpha }-2\log \left( 1+\left( \frac{X_{i}}{\alpha }\right) ^{\beta }\right) +C\right\} ,
\end{equation*}
for $\tau =0,$ where $C$ is a constant that does not depend on the parameters $\beta $ and $\alpha,$ and $B(\cdot, \cdot)$ the beta function,
\begin{equation*}
	B\left( a,b\right) =\int_{0}^{1}x^{a-1}\left( 1-x\right) ^{b-1}dx.
\end{equation*}
 In the following, the MDPDE of $\alpha $ and $\beta $ will denoted by $\widehat{\alpha}_{\tau }$ and $\widehat{\beta}_{\tau },$ respectively. 

Let us now discuss some asymptotic properties of MDPDE. \cite{bahahjjo98} proved that, in general statistical models under mild regularity conditions, the MDPDE verifies that
\begin{equation*}
\sqrt{n}\left( \widehat{\boldsymbol{\theta }}_{\tau }-\boldsymbol{\theta }_0 \right)
\underset{n\longrightarrow \infty }{\overset{\mathcal{L}}{\longrightarrow }}
\mathcal{N}\left( 0, \boldsymbol{J}_{\tau }^{-1}(\boldsymbol{\theta}_0) \boldsymbol{K}_{\tau }(\boldsymbol{\theta}_0) \boldsymbol{J}_{\tau }^{-1}(\boldsymbol{\theta}_0)\right) ,
\end{equation*}
with
\begin{equation}\label{J}
\begin{aligned}
	\boldsymbol{J}_{\tau }(\boldsymbol{\theta}) &= \int_{0}^{\infty }\left( \frac{\partial \log f_{\boldsymbol{\theta}}(x)}{\partial \boldsymbol{\theta}}\right) ^{2} f_{\boldsymbol{\theta}}(x)^{\tau +1}dx\\
	\boldsymbol{K}_{\tau }(\boldsymbol{\theta}) &= \boldsymbol{J}_{2\tau }(\boldsymbol{\theta})-\boldsymbol{\xi }_{\tau }\left( \boldsymbol{\theta}\right) \boldsymbol{\xi }_{\tau }\left( \boldsymbol{\theta}\right)^T,
\end{aligned}
\end{equation}
and
\begin{equation*}
\boldsymbol{\xi}_{\tau }\left( \boldsymbol{\theta} \right) = \int_{0}^{\infty }\frac{\partial \log f_{\boldsymbol{\theta}}(x)}{\partial \boldsymbol{\theta}} f_{\boldsymbol{\theta}}(x)^{\tau +1}dx.
\end{equation*}

\cite{fejamipa24} obtained the explicit expressions of $\boldsymbol{J}_{\tau }(\boldsymbol{\theta}_0), \boldsymbol{K}_{\tau }(\boldsymbol{\theta})$ and $\boldsymbol{\xi }_{\tau }\left( \boldsymbol{\theta}\right)$ under the log-logistic distribution, given as follows:

\begin{equation}
	\begin{aligned}
		\boldsymbol{J}_{\tau }(\alpha ,\beta )&= \left( \begin{array}{cc} J_{\tau }^{11}\left( \alpha ,\beta \right) & J_{\tau }^{12}\left( \alpha ,\beta \right) \\
			J_{\tau }^{12}\left( \alpha ,\beta \right) & J_{\tau }^{22}\left( \alpha , \beta \right) \end{array}\right)  \\
		\boldsymbol{K}_{\tau }(\alpha ,\beta ) &=  \left( \begin{array}{cc} J_{2\tau }^{11}\left( \alpha ,\beta \right) & J_{2\tau }^{12}\left( \alpha ,\beta \right) \\
		J_{2\tau }^{12}\left( \alpha ,\beta \right) & J_{2\tau }^{22}\left( \alpha , \beta \right) \end{array}\right)
		- \left( \begin{array}{cc} \xi _{\tau }\left( \alpha \right)^2 & \xi _{\tau }\left( \alpha \right)\xi _{\tau }\left( \beta \right) \\
		\xi _{\tau }\left( \alpha \right)\xi _{\tau }\left( \beta \right) & \xi _{\tau }\left( \beta \right)^2 \end{array}\right)
	\end{aligned}
	\label{W1.1421}
\end{equation}
where
\begin{equation*}
	\begin{aligned}
	J_{\tau }^{11} \left( \alpha ,\beta \right) &= \left( \frac{\beta }{\alpha } \right)^{\tau +2}
	B\left( \frac{\beta \tau +\tau +\beta }{\beta }, \frac{\tau \beta -\tau +\beta }{\beta }\right)
	\left[ 2\frac{\left( \beta \tau +\tau +\beta \right) \left( -\tau \beta -\beta +\tau \right) }{\beta ^{2}\left( \tau +1\right) \left( 2\tau +3\right) } +1 \right], \\
	J_{\tau }^{22}\left( \alpha ,\beta \right) &= N_{1}+N_{2}+N_{3}+N_{4}+N_{5}+N_{6},\\
	J_{\tau }^{12}\left( \alpha ,\beta \right) &= J_{\tau }^{21}\left( \alpha ,\beta \right) = B_{1}+B_{2}+B_{3}+B_{4}+B_{5}+B_{6},\\
	\end{aligned}
\end{equation*}
with
\begin{enumerate}
	\item[i)] $N_{1}=\dfrac{\beta ^{\tau -2}}{\alpha ^{\tau }}B\left( \frac{\tau \beta +\tau +\beta }{\beta },\frac{\tau \beta -\tau +\beta }{\beta }\right) .$
	
	\item[ii)] $N_{2}=2\frac{\beta ^{\tau -2}}{\alpha ^{\tau }}B(\frac{\tau \beta +\tau +\beta }{\beta },\frac{\tau \beta -\tau +\beta }{\beta })
	\left\{ \Psi \left( \frac{\tau \beta -\tau +\beta }{\beta }\right) -\Psi \left( \frac{\tau \beta +\tau +\beta }{\beta }\right) \right \} .$
	
	\item[iii)] $N_{3}=-4\frac{\beta ^{\tau -2}}{\alpha ^{\tau }}B\left( \frac{\tau \beta +\beta +\tau }{\beta },\frac{\tau \beta +2\beta -\tau }{\beta }\right) \left \{ \Psi \left( \frac{\tau \beta +2\beta -\tau }{\beta }\right) -\Psi \left( \frac{\tau \beta +\beta +\tau }{\beta }\right) \right \} .$
	
	\item[iv)] $N_{4}=\frac{\beta ^{\tau -2}}{\alpha ^{\tau }}B\left( \frac{\tau \beta +\beta +\tau }{\beta },\frac{\tau \beta +\beta -\tau }{\beta }\right)
	\left\{ \left[ \Psi \left( \frac{\tau \beta +\beta -\tau }{\beta }\right) -\Psi \left( \frac{\tau \beta +\beta +\tau }{\beta }\right) \right]^{2}+
	\Psi^{\prime }\left( \frac{\tau \beta +\beta -\tau }{\beta }\right) +\Psi^{\prime }\left( \frac{\tau \beta +\beta +\tau }{\beta }\right) \right\} .$
	
	\item[v)] $N_{5}=-4\frac{\beta ^{\tau -2}}{\alpha ^{\tau }}B\left( \frac{\tau \beta +\beta +\tau }{\beta },\frac{\tau \beta +2\beta -\tau }{\beta }\right) \left\{ \left( \Psi \left( \frac{\tau \beta +2\beta -\tau }{\beta }\right) -\Psi \left( \frac{\tau \beta +\beta +\tau }{\beta }\right) \right)^{2} \right. $
	
	$\left. +\left( \Psi ^{\prime }\left( \frac{\tau \beta +2\beta -\tau }{\beta }\right) +\Psi ^{\prime }\left( \frac{\tau \beta +\beta +\tau }{\beta }\right) \right) \right\} .$
	
	\item[vi)] $N_{6}=4\frac{\beta ^{\tau -2}}{\alpha ^{\tau }}B\left( \frac{\tau \beta +\beta +\tau }{\beta }, \frac{\tau \beta +3\beta -\tau }{\beta }\right) \left\{ \left[ \Psi \left( \frac{\tau \beta +3\beta -\tau }{\beta }\right) -\Psi \left( \frac{\tau \beta +\beta +\tau }{\beta }\right) \right]^{2} \right. $
	
	$\left. +\left( \Psi ^{\prime }\left( \frac{\tau \beta +3\beta -\tau }{\beta }\right) +\Psi ^{\prime }\left( \frac{\tau \beta +\beta +\tau }{\beta }\right) \right) \right\} .$
\end{enumerate}
and
\begin{enumerate}
	\item[i)] $B_{1}=\frac{\beta^{\tau }}{\alpha^{\tau +1}}B\left( \frac{\tau \beta +\beta +\tau }{\beta },\frac{\tau \beta +\beta -\tau }{\beta }\right) .$
	
	\item[ii)] $B_{2}=\frac{\beta^{\tau }}{\alpha^{\tau +1}}B\left( \frac{\tau \beta +\beta +\tau }{\beta },\frac{\tau \beta +\beta -\tau }{\beta }\right)
	\left \{ \Psi \left( \frac{\tau \beta +\beta -\tau }{\beta }\right) - \Psi \left( \frac{\tau \beta +\beta +\tau }{\beta }\right) \right \} .$
	
	\item[iii)] $B_{3}=-2\frac{\beta^{\tau }}{\alpha^{\tau +1}}B\left( \frac{\tau \beta +\beta +\tau }{\beta },\frac{\tau \beta +2\beta -\tau }{\beta }\right) \left \{ \Psi \left( \frac{\tau \beta +2\beta -\tau }{\beta }\right) -\Psi \left( \frac{\tau \beta +\beta +\tau }{\beta }\right) \right \} .$
	
	\item[iv)] $B_{4}=-2\frac{\beta^{\tau }}{\alpha^{\tau +1}}B\left( \frac{\tau \beta +2\beta +\tau }{\beta },\frac{\tau \beta +\beta -\tau }{\beta }\right) .$
	
	\item[v)] $B_{5}=-2\frac{\beta ^{\tau }}{\alpha ^{\tau +1}}B\left( \frac{\tau \beta +2\beta +\tau }{\beta },\frac{\tau \beta +\beta -\tau }{\beta }\right) \left\{ \Psi \left( \frac{\tau \beta +\beta -\tau }{\beta }\right) -\Psi \left( \frac{\tau \beta +2\beta +\tau }{\beta }\right) \right\} .$
	
	\item[vi)] $B_{6}=4\frac{\beta ^{\tau }}{\alpha ^{\tau +1}}B\left( \frac{\tau \beta +2\beta +\tau }{\beta },\frac{\tau \beta +2\beta -\tau }{\beta }\right) \left\{ \Psi \left( \frac{\tau \beta +2\beta -\tau }{\beta }\right) - \Psi \left( \frac{\tau \beta +2\beta +\tau }{\beta }\right) \right\} .$
\end{enumerate}
Here $\Psi \left( x\right) $ denotes the digamma function defined as the logarithmic derivative of the gamma function and by $\Psi' \left( x\right) $ its derivative, the trigamma function.

On the other hand, under a log-logistic distribution, the vector $\boldsymbol{\xi }_{\tau }\left( \alpha ,\beta \right)$ is given by
\begin{equation}
	\boldsymbol{\xi }_{\tau }\left( \alpha ,\beta \right) =\left( \xi _{\tau }\left( \alpha \right) ,\xi _{\tau }\left( \beta \right) \right) \label{xifunc}
\end{equation}
with
\begin{enumerate}
	\item[i)] $\xi _{\tau }\left( \alpha \right) = \left( \frac{\beta }{\alpha }\right)^{\tau +1}
	B\left( \frac{\tau \beta +\beta +\tau }{\beta },\frac{\tau \beta +\beta -\tau }{\beta }\right) \left( \frac{-\tau }{\beta +\tau \beta }\right) .$
	
	\item[ii)] $\xi _{\tau }\left( \beta \right) = \frac{\beta ^{\tau -1}}{\alpha ^{\tau }}
	B\left( \frac{\tau \beta +\beta +\tau }{\beta },\frac{\tau \beta +\beta -\tau }{\beta }\right)
	\left\{ 1+\Psi \left( \frac{\tau \beta +\beta -\tau }{\beta }\right) \right. $
	
	$\left. +\frac{\left( \tau \beta +\beta -\tau \right) }{\beta \left( 2\tau +2\right) } \Psi \left( \frac{\tau \beta +2\beta -\tau }{\beta }\right) +
	\frac{\left( 3\tau \beta +3\beta -\tau \right) }{2\tau \beta +2\beta }\Psi \left( \frac{\tau \beta +\beta +\tau }{\beta }\right) \right\} .$
\end{enumerate}

Hence, substituting the log-logistic density on the previous expressions, \cite{fejamipa24} obtained the asymptotic distribution of the MDPDE as follows:
\begin{equation*}
\sqrt{n}\left( \widehat{\alpha }_{\tau }- \alpha_0 \right)
\underset{n\longrightarrow \infty }{\overset{\mathcal{L}}{\longrightarrow }}
\mathcal{N}\left( 0, J_{\tau }^{11}(\alpha_0)^{-1} K^{11}_{\tau }(\alpha_0) J_{\tau }^{11}(\alpha_0)^{-1}\right) . \label{W1.1416}
\end{equation*}

\begin{equation*}
\sqrt{n}\left( \widehat{\beta }_{\tau }- \beta_0 \right)
\underset{n\longrightarrow \infty }{\overset{\mathcal{L}}{\longrightarrow }}
\mathcal{N}\left( 0, J_{\tau }^{22}(\beta_0)^{-1} K^{22}_{\tau }(\beta_0) J_{\tau }^{22}(\beta_0)^{-1}\right) . \label{W1.1417}
\end{equation*}



\section{Wald-type tests based on MDPDE}

In this Section we introduce Wald-type tests based on MDPDEs for testing simple and composite null hypotheses.

\subsection{Simple null hypothesis}

Let us first consider the simple null hypothesis given by
\begin{equation}
H_{0}:\boldsymbol{\theta } = \boldsymbol{\theta }_0 \text{ versus }H_{1}: \boldsymbol{\theta } \neq \boldsymbol{\theta }_0. \label{W1.5}
\end{equation}
where $\boldsymbol{\theta} = (\alpha,\beta) \in \mathbb{R}^2.$
Given  a random simple of size $n$  from a log-logistic distribution, $X_{1},....X_{n},$
the Wald test statistic under the null hypothesis (\ref{W1.5}) is given by
$$ W_n(\widehat{\boldsymbol{\theta }}_{0}) = n (\widehat{\boldsymbol{\theta}}_{0 } - \boldsymbol{\theta }_0)^T \boldsymbol{I}^{-1}(\boldsymbol{\theta }_0) (\widehat{\boldsymbol{\theta }}_{0} - \boldsymbol{\theta }_0),$$ where $\widehat{\boldsymbol{\theta }}_{0} $ is the MLE of the log-logistic model parameters $\boldsymbol{\theta} = (\alpha,\beta)$ and $\boldsymbol{I}(\boldsymbol{\theta }_0)$ is the Fisher information matrix  at $\boldsymbol{\theta }_0.$ 
If the null hypothesis holds, the Wald test statistic converges asymptotically to a $\chi^2_2$ distribution.
Hence, at a significance level of $\alpha,$ 
the null hypothesis should be rejected if the test statistic exceeds the $100(1-\alpha)$ quantile of the chi-square distribution with $2$ degrees of freedom $\chi^2_{2, \alpha }.$ That is, the reject region of the Wald-test is given as follows
$$\text{RC} = \{(x_1,...,x_n) : W_n(\widehat{\boldsymbol{\theta}}_0) > \chi^2_{2, \alpha} \}.$$

Based on the classical Wald test, \cite{bamamapa16} generalized the test using the MDPDEs. The robustness properties of the resulting Wald-type test statistics were therein established.  

\begin{definition}
The robust Wald-type test statistic based on the MDPDE with tuning parameter $\tau \geq 0,$ $\widehat{\boldsymbol{\theta }}_{\tau },$ for testing (\ref{W1.5}) is given by
\begin{equation}
W_{n}\left( \widehat{\boldsymbol{\theta }}_{\tau }\right) = n \left( \widehat{\boldsymbol{\theta }}_{\tau } - \boldsymbol{\theta }_0\right)^{T} \boldsymbol{\Sigma }(\boldsymbol{\theta }_0)^{-1} \left( \widehat{\boldsymbol{\theta }}_{\tau }- \boldsymbol{\theta }_0\right) , \label{W1.6}
\end{equation}
with
\begin{equation*}
\boldsymbol{\Sigma }(\boldsymbol{\theta }_{0})= \boldsymbol{J}_{\tau }^{-1}(\boldsymbol{\theta }_{0}) \boldsymbol{K}_{\tau }(\boldsymbol{\theta }_{0}) \boldsymbol{J}_{\tau }^{-1}(\boldsymbol{\theta }_{0}).
\end{equation*}
\end{definition}

As expected, if we consider the limiting case $\tau =0,$ the MDPDE $\widehat{\boldsymbol{\theta }}_{\tau=0 }$ coincides with the MLE of $\boldsymbol{\theta}$ and $\boldsymbol{J}_{\tau=0}^{-1}(\boldsymbol{\theta }_{0}), \boldsymbol{K}_{\tau =0}(\boldsymbol{\theta }_{0}), \boldsymbol{J}_{\tau =0}^{-1}(\boldsymbol{\theta }_{0})$ with the inverse of the Fisher information matrix $I^{-1}(\widehat{\boldsymbol{\theta }}_{0}).$ Therefore, the classical Wald test statistic is obtained at $\tau=0.$

As with the classical test, an asymptotic distribution is required to define an asymptotic rejection region for the test. The key idea behind this definition is to replace the estimator with another consistent and robust estimator while substituting the inverse of the Fisher information matrix, which represents the asymptotic variance, with the asymptotic variance matrix of the MDPDE. Therefore, a similar asymptotic behaviour to that of the MLE is expected for this generalization.

For the special case of the log-logistic distribution with two unknown parameters, $\boldsymbol{\theta} = (\alpha, \beta)$, the Wald-type test statistic based on the MDPDE $\left(\widehat{\alpha }_{\tau }, \widehat{\beta }_{\tau }\right)$ is given by
%
%
\begin{equation}
	W_{n}\left( \widehat{\alpha }_{\tau }, \widehat{\beta }_{\tau }\right) = n \left( \widehat{\alpha }_{\tau }- \alpha _{0}, \widehat{\beta }_{\tau }-\beta _{0}\right) \left( \boldsymbol{J}_{\tau }^{-1}(\alpha _{0},\beta _{0}) \boldsymbol{K}_{\tau }(\alpha_{0}, \beta _{0}) \boldsymbol{J}_{\tau }^{-1}(\alpha _{0}, \beta _{0})\right)^{-1} \left( \widehat{\alpha }_{\tau }-\alpha _{0}, \widehat{\beta }_{\tau }-\beta _{0}\right) ^{T},  \label{W1.7}
\end{equation}
where matrices $\boldsymbol{J}_\tau$ and $\boldsymbol{K}_\tau$ are given as in (\ref{W1.1421}).
Similarly, for known shape $\beta$ and known scale $\alpha$ we may define the simple null hypotheses
\begin{equation}
	H_{0}:\alpha =\alpha _{0}\text{ versus }H_{1}:\alpha \neq \alpha _{0},( \beta \text{ known})  \label{W1.9}
\end{equation}
and
\begin{equation}
	H_{0}:\beta =\beta _{0}\text{ versus }H_{1}:\beta \neq \beta _{0}\quad ( \alpha \text{ known}),  \label{W1.10}
\end{equation}
respectively, and the corresponding Wald-type test statistics are given by
\begin{equation*}
	W_{n}\left( \widehat{\alpha }_{\tau }\right) = n \left( \widehat{\alpha }_{\tau }-\alpha_{0}\right)^{2} \frac{J_{\tau }^{11}\left( \alpha_0 \right)^2}{K_{\tau }^{11}(\alpha_0 )}
\end{equation*}
and
\begin{equation*}
	W_{n}\left( \widehat{\beta }_{\tau }\right) = n \left( \widehat{\beta }_{\tau }-\beta_{0}\right)^{2} \frac{J_{\tau }^{22}\left( \beta_0 \right)^2}{K_{\tau }^{22}(\beta_0)},
\end{equation*}
respectively where $J_{\tau }^{ii}$ and $K_{\tau }^{ii}, i =1,2$ are 
	the diagonal entries of matrices $\boldsymbol{J}_\tau(\alpha, \beta)$ and $\boldsymbol{K}(\alpha, \beta)$ given in (\ref{W1.1421}) with fixed values of the known parameter, accordingly. 


\begin{theorem}
Under the null hypothesis $H_0$ given in (\ref{W1.5}), 
the asymptotic distribution of the Wald-type test statistics based on a log-logistic distribution with two unknown parameters, $W_{n}\left( \widehat{\boldsymbol{\theta }}_{\tau }\right)$  is a chi-square distribution with \textcolor{black}{2} degrees of freedom, and the the asymptotic distribution of the Wald-type test statistics based on a log-logistic distribution with just one unknown parameters, $W_{n}\left( \widehat{\alpha}_{\tau }\right)$ and $W_{n}\left( \widehat{\beta}_{\tau}\right),$ is a chi-square distribution with \textcolor{black}{1} degrees of freedom,
\end{theorem}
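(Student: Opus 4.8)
The plan is to derive all three limiting laws from a single mechanism: the asymptotic normality of the MDPDE established in Section \ref{sec:2}, combined with the classical fact that a Gaussian quadratic form, once standardized by the inverse of its own covariance matrix, follows a chi-square law whose degrees of freedom equal the dimension of the Gaussian vector. I would first settle the two-parameter statistic $W_n(\widehat{\boldsymbol{\theta}}_\tau)$ and then observe that the one-parameter statistics $W_n(\widehat{\alpha}_\tau)$ and $W_n(\widehat{\beta}_\tau)$ are the same computation carried out in dimension one.

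For the two-parameter case I would start from the asymptotic normality recalled in Section \ref{sec:2}: under $H_0$ in (\ref{W1.5}), so that $\boldsymbol{\theta}_0$ is the true value,
\[
\sqrt{n}\left(\widehat{\boldsymbol{\theta}}_\tau - \boldsymbol{\theta}_0\right) \overset{\mathcal{L}}{\longrightarrow} \mathcal{N}\left(\boldsymbol{0}, \boldsymbol{\Sigma}(\boldsymbol{\theta}_0)\right), \qquad \boldsymbol{\Sigma}(\boldsymbol{\theta}_0) = \boldsymbol{J}_\tau^{-1}(\boldsymbol{\theta}_0)\boldsymbol{K}_\tau(\boldsymbol{\theta}_0)\boldsymbol{J}_\tau^{-1}(\boldsymbol{\theta}_0).
\]
Setting $\boldsymbol{Z}_n = \sqrt{n}(\widehat{\boldsymbol{\theta}}_\tau - \boldsymbol{\theta}_0)$, the statistic (\ref{W1.6}) is precisely the quadratic form $W_n(\widehat{\boldsymbol{\theta}}_\tau) = \boldsymbol{Z}_n^T \boldsymbol{\Sigma}(\boldsymbol{\theta}_0)^{-1}\boldsymbol{Z}_n$. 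Since $\boldsymbol{\Sigma}(\boldsymbol{\theta}_0)^{-1}$ is a fixed (nonrandom) matrix, the map $\boldsymbol{z}\mapsto \boldsymbol{z}^T\boldsymbol{\Sigma}(\boldsymbol{\theta}_0)^{-1}\boldsymbol{z}$ is continuous, so the continuous mapping theorem gives $W_n(\widehat{\boldsymbol{\theta}}_\tau)\overset{\mathcal{L}}{\longrightarrow}\boldsymbol{Z}^T\boldsymbol{\Sigma}(\boldsymbol{\theta}_0)^{-1}\boldsymbol{Z}$ with $\boldsymbol{Z}\sim\mathcal{N}(\boldsymbol{0},\boldsymbol{\Sigma}(\boldsymbol{\theta}_0))$. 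To identify this limit I would introduce the symmetric positive definite square root $\boldsymbol{\Sigma}(\boldsymbol{\theta}_0)^{1/2}$ and put $\boldsymbol{Y}=\boldsymbol{\Sigma}(\boldsymbol{\theta}_0)^{-1/2}\boldsymbol{Z}$, so that $\boldsymbol{Y}\sim\mathcal{N}(\boldsymbol{0},\boldsymbol{I}_2)$ and $\boldsymbol{Z}^T\boldsymbol{\Sigma}(\boldsymbol{\theta}_0)^{-1}\boldsymbol{Z}=\boldsymbol{Y}^T\boldsymbol{Y}=Y_1^2+Y_2^2$ is a sum of two independent squared standard normals, hence exactly $\chi^2_2$.

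For the one-parameter statistics the model is genuinely univariate ($\beta$ or $\alpha$ being fixed), and the marginal normality stated at the end of Section \ref{sec:2} gives $\sqrt{n}(\widehat{\alpha}_\tau-\alpha_0)\overset{\mathcal{L}}{\longrightarrow}\mathcal{N}(0,\sigma_\alpha^2)$ with $\sigma_\alpha^2 = K_\tau^{11}(\alpha_0)/J_\tau^{11}(\alpha_0)^2$; since the standardizing factor appearing in $W_n(\widehat{\alpha}_\tau)$ is exactly $1/\sigma_\alpha^2 = J_\tau^{11}(\alpha_0)^2/K_\tau^{11}(\alpha_0)$, we obtain $W_n(\widehat{\alpha}_\tau) = \big(\sqrt{n}(\widehat{\alpha}_\tau-\alpha_0)/\sigma_\alpha\big)^2 \overset{\mathcal{L}}{\longrightarrow}\chi^2_1$, and the argument for $\widehat{\beta}_\tau$ is identical. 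The one point requiring genuine verification, and thus the main obstacle, is that $\boldsymbol{\Sigma}(\boldsymbol{\theta}_0)$ be nonsingular so that the standardization is legitimate and the limit has full-rank $\chi^2_2$; this reduces to the positive definiteness of $\boldsymbol{J}_\tau(\boldsymbol{\theta}_0)$ and of $\boldsymbol{K}_\tau(\boldsymbol{\theta}_0)$. The former follows from identifiability of the log-logistic family, and the latter from observing that, writing $\boldsymbol{u}_{\boldsymbol{\theta}}=\partial \log f_{\boldsymbol{\theta}}/\partial\boldsymbol{\theta}$, the matrix $\boldsymbol{K}_\tau(\boldsymbol{\theta}_0)=\boldsymbol{J}_{2\tau}(\boldsymbol{\theta}_0)-\boldsymbol{\xi}_\tau(\boldsymbol{\theta}_0)\boldsymbol{\xi}_\tau(\boldsymbol{\theta}_0)^T$ is precisely the covariance matrix of $\boldsymbol{u}_{\boldsymbol{\theta}_0}(X)f_{\boldsymbol{\theta}_0}(X)^\tau$ under $X\sim f_{\boldsymbol{\theta}_0}$, which is positive definite unless its components are affinely degenerate; for the explicit entries in (\ref{W1.1421}) this positivity can be confirmed directly.
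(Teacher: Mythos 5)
Your proof is correct, and it is essentially the argument the paper itself relies on: the paper states this theorem without proof (deferring, as with the composite-null version, to the general theory of Wald-type tests based on MDPDEs in \cite{bamamapa16}), and that theory rests on exactly your chain of reasoning — asymptotic normality of $\widehat{\boldsymbol{\theta}}_\tau$ under $H_0$, the continuous mapping theorem applied to the quadratic form, and standardization by $\boldsymbol{\Sigma}(\boldsymbol{\theta}_0)^{-1/2}$ (respectively, by $\sigma_\alpha^{-1}$ or $\sigma_\beta^{-1}$ in the univariate cases). Your additional observation that $\boldsymbol{K}_\tau(\boldsymbol{\theta}_0)=\boldsymbol{J}_{2\tau}(\boldsymbol{\theta}_0)-\boldsymbol{\xi}_\tau(\boldsymbol{\theta}_0)\boldsymbol{\xi}_\tau(\boldsymbol{\theta}_0)^T$ is the covariance matrix of $\boldsymbol{u}_{\boldsymbol{\theta}_0}(X)f_{\boldsymbol{\theta}_0}(X)^\tau$, so that nonsingularity of $\boldsymbol{\Sigma}(\boldsymbol{\theta}_0)$ must be (and can be) checked, is a legitimate point that the paper glosses over.
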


Based on the previous discussion, the null hypothesis given in (\ref{W1.5}) should be rejected at significance level $\alpha$ if
\begin{equation*}
	W_{n}\left( \widehat{\alpha }_{\tau },\widehat{\beta }_{\tau }\right) >\chi_{2,\alpha }^{2}
\end{equation*}
where $\chi _{r,\alpha }^{2}$ denotes the $100(1-\alpha )$ quantile point of chi-square distribution with r degrees of freedom. 
Similarly, the null hypotheses given in (\ref{W1.9}) and (\ref{W1.10}) should be rejected at significance level $\alpha$ if
\begin{equation*}
	W_{n}\left( \widehat{\alpha }_{\tau }\right) >\chi_{1,\alpha }^{2} \text{ and }
	W_{n}\left( \widehat{\beta }_{\tau }\right) >\chi_{1,\alpha }^{2}
\end{equation*}
respectively.

%

%

Importantly, if we denote by
$
\beta _{W_{n}\left( \alpha _{0},\beta _{0}\right) }(\alpha ^{\ast },\beta^{\ast })
$
the power function of the Wald-type tests under two unknown parameters, we have that
\begin{equation*}
\lim_{n\rightarrow \infty }W_{n}\left( \alpha _{0},\beta _{0}\right) =1,
\end{equation*}
i.e., the tests are consistent in the sense of Fraser \cite{fra57}.

\subsection{Composite null hypothesis}

Let us now consider composite null hypotheses. We denote by $\Theta = \mathbb{R}^{+}\times \mathbb{R}^{+}$ 
the parameter space associated to the log-logistic model, and
\begin{equation}
	\Theta _{0}=\left \{ \left( \alpha ,\beta \right) \in \Theta /\boldsymbol{m} \left( \alpha ,\beta \right) =\boldsymbol{0}_{r}\right \}  \label{W.Null}
\end{equation}
a restricted parameter defined through a differentiable function
\begin{equation}
	\boldsymbol{m}:\text{ }\mathbb{R}^{+}\times \mathbb{R}^{+}\rightarrow \mathbb{R}^{r}\text{ }(r\leq 2).  \label{W1.42}
\end{equation}
 Therefore, the Jacobian matrix of the function $\boldsymbol{m},$ here denoted by
\begin{equation}
	\boldsymbol{M}\left( \alpha ,\beta \right) =\frac{\partial \boldsymbol{m} \left( \alpha ,\beta \right) ^{T}}{\boldsymbol{\partial }\left( \alpha , \beta \right) }, \label{W1.43}
\end{equation}
exists and is continuous in $\left( \alpha ,\beta \right) $ with $rank\left( \boldsymbol{M}\left( \alpha ,\beta \right) \right) =r.$ 

Our interest is in testing composite null hypothesis of the form
\begin{equation}
H_{0}:\left( \alpha ,\beta \right) \in \Theta _{0}\text{ versus } H_{1}:\left( \alpha ,\beta \right) \notin \Theta _{0}.  \label{W1.5_2}
\end{equation}
Note that the simple null hypothesis given in (\ref{W1.5}), (\ref{W1.9}) and (\ref{W1.10}) could be alternatively defined using the linear functions
 $\boldsymbol{m}\left( \alpha ,\beta \right) =\left( \alpha -\alpha_{0}, \beta -\beta _{0}\right)^{T},$   $\boldsymbol{m}\left( \alpha ,\beta \right) =\beta -\beta_{0}$ and $\boldsymbol{m}\left( \alpha ,\beta \right) =\alpha -\alpha _{0},$ respectively.
%
%
 For composite null hypothesis, te classical Wald test statistic based on the MLE, $\boldsymbol{\widehat{\theta}}_{\tau=0},$ for this test is given by
$$ W_n(\widehat{\boldsymbol{\theta}}_{\tau=0}) = n \boldsymbol{m}(\widehat{\boldsymbol{\theta }}_{\tau=0})^T \boldsymbol{I}^{-1}(\widehat{\boldsymbol{\theta }}_{\tau=0}) \boldsymbol{m}(\widehat{\boldsymbol{\theta }}_{\tau=0}).$$

Hence, as before, the generalized Wald-type test statistic based on the MDPDE under composite null hypothesis is defined as follows.
\begin{definition}
The Wald-type test statistic based on the MDPDE with tuning paramter $\tau$ for testing (\ref{W1.5}) under a log-logistic model is given by
\begin{equation}
W_{n}\left( \widehat{\alpha }_{\tau }, \widehat{\beta }_{\tau }\right) =n \boldsymbol{m}\left( \widehat{\alpha }_{\tau },\widehat{\beta }_{\tau }\right)^{T} \boldsymbol{\Sigma }(\widehat{\alpha }_{\tau },\widehat{\beta }_{\tau })^{-1}\boldsymbol{m}\left( \widehat{\alpha }_{\tau },\widehat{\beta }_{\tau }\right) , \label{W1.6}
\end{equation}
with
\begin{equation*}
\boldsymbol{\Sigma }(\widehat{\alpha }_{\tau },\widehat{\beta }_{\tau })= \boldsymbol{M}^{T}(\widehat{\alpha }_{\tau },\widehat{\beta }_{\tau }) \boldsymbol{J}_{\tau }^{-1}(\widehat{\alpha }_{\tau },\widehat{\beta }_{\tau })\boldsymbol{K}_{\tau }(\widehat{\alpha }_{\tau },\widehat{\beta }_{\tau }) \boldsymbol{J}_{\tau }^{-1}(\widehat{\alpha }_{\tau },\widehat{\beta }_{\tau })\boldsymbol{M}(\widehat{\alpha }_{\tau },\widehat{\beta }_{\tau })
\end{equation*}
and the matrices $\boldsymbol{M}(\boldsymbol{\theta }),\boldsymbol{J}_{\tau }\left( \boldsymbol{\theta }\right) $ and $\boldsymbol{K}_{\tau
}\left( \boldsymbol{\theta }\right) $ were defined in (\ref{W1.43}), (\ref{W1.1421}) and (\ref{xifunc}), respectively and the function $\boldsymbol{m}$ in (\ref{W1.42}).
\end{definition}

\textcolor{black}{Importantly, note that the variance-covariance matrix is evaluated at the MDPDE, making this test equivalent to the Wald test defined in (\ref{W1.6}) under the null hypothesis. However, in practice, the two expressions are not identical. }
Again, setting $\tau =0,$ $\boldsymbol{\widehat{\theta}}_0 = (\widehat{\alpha }_{\tau=0 },\widehat{\beta }_{\tau=0})$ coincides with the MLE of $(\alpha ,\beta)$ and $\boldsymbol{J}_{\tau =0}^{-1}(\widehat{\alpha }, \widehat{\beta })\boldsymbol{K}_{\tau =0}(\widehat{\alpha },\widehat{\beta }) \boldsymbol{J}_{\tau =0}^{-1}(\widehat{\alpha },\widehat{\beta })$ with the Fisher information matrix. Thus, we get the classical Wald test statistic as a special case of the Wald-type test family.


\begin{theorem}
The asymptotic distribution of the Wald-type test statistics given in (\ref{W1.6}) is a chi-square distribution with $r$ degrees of freedom.
\end{theorem}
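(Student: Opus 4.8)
The plan is to reduce the statement to the standard fact that a nondegenerate Gaussian quadratic form is chi-squared, exactly as in the simple-hypothesis case, with the delta method absorbing the nonlinearity of the constraint function $\boldsymbol{m}$. Throughout, let $\boldsymbol{\theta}_0 = (\alpha_0,\beta_0) \in \Theta_0$ denote the true value under $H_0$, so that $\boldsymbol{m}(\boldsymbol{\theta}_0) = \boldsymbol{0}_r$, and write $\boldsymbol{V}_\tau(\boldsymbol{\theta}_0) = \boldsymbol{J}_\tau^{-1}(\boldsymbol{\theta}_0)\boldsymbol{K}_\tau(\boldsymbol{\theta}_0)\boldsymbol{J}_\tau^{-1}(\boldsymbol{\theta}_0)$ for the asymptotic covariance matrix of the MDPDE established earlier in the excerpt.

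First I would apply the delta method to the random vector $\boldsymbol{m}(\widehat{\boldsymbol{\theta}}_\tau)$. Since $\boldsymbol{m}$ is differentiable with Jacobian $\boldsymbol{M}$ continuous at $\boldsymbol{\theta}_0$, a first-order Taylor expansion about $\boldsymbol{\theta}_0$ gives
$$\sqrt{n}\,\boldsymbol{m}(\widehat{\boldsymbol{\theta}}_\tau) = \boldsymbol{M}^T(\boldsymbol{\theta}_0)\,\sqrt{n}(\widehat{\boldsymbol{\theta}}_\tau - \boldsymbol{\theta}_0) + o_P(1),$$
where the constant term vanishes because $\boldsymbol{m}(\boldsymbol{\theta}_0) = \boldsymbol{0}_r$ under $H_0$. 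Combining this with the asymptotic normality of the MDPDE and Slutsky's theorem yields
$$\sqrt{n}\,\boldsymbol{m}(\widehat{\boldsymbol{\theta}}_\tau) \overset{\mathcal{L}}{\longrightarrow} \mathcal{N}\left(\boldsymbol{0}_r,\, \boldsymbol{M}^T(\boldsymbol{\theta}_0)\boldsymbol{V}_\tau(\boldsymbol{\theta}_0)\boldsymbol{M}(\boldsymbol{\theta}_0)\right),$$
and the limiting covariance is precisely $\boldsymbol{\Sigma}(\boldsymbol{\theta}_0)$ as given in the definition.

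Next I would verify that $\boldsymbol{\Sigma}(\boldsymbol{\theta}_0)$ is nonsingular, so that the limiting normal law is nondegenerate of full dimension $r$. Because $\boldsymbol{V}_\tau(\boldsymbol{\theta}_0)$ is a genuine covariance matrix and is positive definite under the regularity conditions guaranteeing nonsingularity of $\boldsymbol{J}_\tau$ and $\boldsymbol{K}_\tau$, and because $\boldsymbol{M}(\boldsymbol{\theta}_0)$ has full column rank $r$ by hypothesis, the $r\times r$ matrix $\boldsymbol{\Sigma}(\boldsymbol{\theta}_0) = \boldsymbol{M}^T\boldsymbol{V}_\tau\boldsymbol{M}$ is positive definite and hence invertible. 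I would then invoke the classical fact that if $\boldsymbol{Z} \sim \mathcal{N}(\boldsymbol{0}_r,\boldsymbol{A})$ with $\boldsymbol{A}$ nonsingular, then $\boldsymbol{Z}^T\boldsymbol{A}^{-1}\boldsymbol{Z} \sim \chi^2_r$.

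The one step requiring care --- and the point flagged by the remark preceding the theorem --- is that the statistic uses the plug-in matrix $\boldsymbol{\Sigma}(\widehat{\boldsymbol{\theta}}_\tau)$ rather than $\boldsymbol{\Sigma}(\boldsymbol{\theta}_0)$. To close this gap I would use the consistency of the MDPDE, $\widehat{\boldsymbol{\theta}}_\tau \overset{P}{\longrightarrow} \boldsymbol{\theta}_0$, together with the continuity of $\boldsymbol{M}$, $\boldsymbol{J}_\tau$, and $\boldsymbol{K}_\tau$ in $\boldsymbol{\theta}$, to conclude by the continuous mapping theorem that $\boldsymbol{\Sigma}(\widehat{\boldsymbol{\theta}}_\tau)^{-1} \overset{P}{\longrightarrow} \boldsymbol{\Sigma}(\boldsymbol{\theta}_0)^{-1}$. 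A final application of Slutsky's theorem then shows that substituting the consistent estimator does not alter the limiting law, so
$$W_n(\widehat{\alpha}_\tau,\widehat{\beta}_\tau) = \left[\sqrt{n}\,\boldsymbol{m}(\widehat{\boldsymbol{\theta}}_\tau)\right]^T \boldsymbol{\Sigma}(\widehat{\boldsymbol{\theta}}_\tau)^{-1} \left[\sqrt{n}\,\boldsymbol{m}(\widehat{\boldsymbol{\theta}}_\tau)\right] \overset{\mathcal{L}}{\longrightarrow} \chi^2_r.$$
The main obstacle is therefore not the distributional algebra but the joint bookkeeping of the plug-in estimation and the rank condition; once the positive-definiteness of $\boldsymbol{\Sigma}(\boldsymbol{\theta}_0)$ and the consistency of the estimated covariance are secured, the chi-squared limit with exactly $r$ degrees of freedom follows immediately.
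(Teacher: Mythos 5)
Your proof is correct, and it is the standard argument. Note that the paper itself contains no proof of this theorem: it defers to an external reference (the citation is in fact empty in the source, but the intended one is presumably \cite{bamamapa16}, where Wald-type tests based on MDPDEs were introduced and this asymptotic result established in general). Your chain of steps --- linearize $\boldsymbol{m}$ at $\boldsymbol{\theta}_0$ via the delta method using $\boldsymbol{m}(\boldsymbol{\theta}_0)=\boldsymbol{0}_r$, obtain the $r$-variate normal limit for $\sqrt{n}\,\boldsymbol{m}(\widehat{\boldsymbol{\theta}}_\tau)$ with covariance $\boldsymbol{M}^T(\boldsymbol{\theta}_0)\boldsymbol{V}_\tau(\boldsymbol{\theta}_0)\boldsymbol{M}(\boldsymbol{\theta}_0)$, check nondegeneracy from the rank-$r$ assumption on $\boldsymbol{M}$ and positive definiteness of $\boldsymbol{V}_\tau$, and then justify the plug-in matrix $\boldsymbol{\Sigma}(\widehat{\boldsymbol{\theta}}_\tau)$ by consistency, continuity, and Slutsky --- is exactly the argument of the cited literature, so your proposal supplies the proof the paper omits, with no gaps.
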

The proof is given in \cite{}.

Based on the previous Theorem, the Wald-type test based on the MDPDE with tuning parameter $\tau$, should be reject with significance level $\alpha $ if
\begin{equation*}
W_{n}\left( \widehat{\alpha }_{\tau },\widehat{\beta }_{\tau }\right) >\chi_{r,\alpha }^{2}.
\end{equation*}

And for the power function,
\begin{equation*}
\lim_{n\rightarrow \infty }W_{n}\left( \alpha _{0},\beta _{0}\right) =1.
\end{equation*}


\section{Rao-type tests}

Finally, let us address the generalization of the Rao test based on MDPDE under the log-logistic distribution. A key advantage of the test under a simple null hypothesis is that it does not require parameter estimation, relying only on the computation of the Fisher information matrix. In contrast, for a composite null hypothesis, a restricted MLE is necessary. In this section, we discuss both cases separately and derive explicit expressions.


\subsection{Simple null hypothesis}

Consider the simple null hypotheses defined in (\ref{W1.5}), (\ref{W1.9}) and (\ref{W1.10}) for the log-logistic model parameters.
The classical Rao-type test statistic for testing (\ref{W1.5}) is given by
$$ R(\boldsymbol{\theta}_0)= n U(\boldsymbol{\theta}_0) I^{-1}(\boldsymbol{\theta}_0) U(\boldsymbol{\theta_0}),$$
with $I(\boldsymbol{\theta}_0)$ the fisher information matrix and
\textcolor{black}{$$ U(\boldsymbol{\theta }_0) = {1\over n} \sum_{i=1}^n u(X_i, \boldsymbol{\theta }_0) = {1\over n} \sum_{i=1}^n {\partial log f_{\boldsymbol{\theta}}(x)\over \boldsymbol{\theta}}\bigg|_{\boldsymbol{\theta} = \boldsymbol{\theta}_0},$$}
the score function of the log-logistic model (first derivative of log-likelihood).
Similar expressions can be defined for the tests (\ref{W1.9}) and (\ref{W1.10}).

It can be proved that, if the null hypothesis holds, then $ R(\boldsymbol{\theta}_0)\overset{\mathcal{L}}{\underset{n\rightarrow \infty }{\longrightarrow }} \chi^2_{2},$ and so an asymptotic reject region at a significance level $\alpha$ is defined as 
$$\text{RC} = \{(x_1,...,x_n) : R(\boldsymbol{\theta}_0) > \chi^2_{2, \alpha} \}.$$

We next define the generalization of the Rao test by using the MDPDE. Similarly as for the classical test, it is necessary to define and compute the score function of the DPD for estimating $\alpha$ and $\beta$. These score define the estimating equations of the MDPDE, and characterizes the robust estimator as an M-estimator. 

\begin{proposition} \label{prop:score_alpha}
		Under a log-logistic model the score function of the DPD with fixed $\tau$ for the scale parameter $\alpha$ is given by
		\textcolor{black}{\begin{eqnarray*}
				u_{\tau }(x,\alpha )&=&\frac{\partial \log f_{\alpha ,\beta }(x)}{\partial \alpha }f_{\alpha ,\beta }(x)^{\tau }-\int \frac{\partial \log f_{\alpha , \beta }(x)}{\partial \alpha }f_{\alpha ,\beta }(x)^{\tau +1}dx \\
				&=&\frac{\beta \left(
					-\alpha ^{\beta }+X^{\beta }\right) \beta ^{\tau }\alpha ^{\beta \tau
					}X^{\left( \beta -1\right) \tau }}{\alpha \left( \alpha ^{\beta
					}+X^{\beta }\right) ^{2\tau +1}} \\
				&&+\frac{1}{1+\tau}\frac{\tau }{\alpha }\left( \frac{\beta }{\alpha }\right) ^{\tau }B\left( 
				\frac{\beta \tau +\tau +\beta }{\beta },\frac{\beta \tau -\tau +\beta }{%
					\beta }\right).
		\end{eqnarray*}}
\end{proposition}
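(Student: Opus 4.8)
The plan is to verify the stated closed form by evaluating, term by term, the two pieces appearing in the defining expression for $u_\tau(x,\alpha)$. The structural form in the first line — a pointwise contribution $\frac{\partial \log f_{\alpha,\beta}}{\partial \alpha}\,f_{\alpha,\beta}^{\tau}$ minus its model expectation $\int \frac{\partial \log f_{\alpha,\beta}}{\partial \alpha}\,f_{\alpha,\beta}^{\tau+1}\,dx$ — is precisely the $\alpha$-component of the estimating equation obtained by differentiating the surrogate objective $H_{n,\tau}$ in (\ref{W1.1414}) with respect to $\alpha$ and dividing out the common factor $(1+\tau)$; I would recall this briefly and then concentrate on the two explicit evaluations.

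First I would compute the log-derivative. From $\log f_{\alpha,\beta}(x) = \log\beta + \beta\log\alpha + (\beta-1)\log x - 2\log(x^\beta+\alpha^\beta)$ one obtains $\frac{\partial \log f_{\alpha,\beta}}{\partial \alpha} = \frac{\beta}{\alpha} - \frac{2\beta\alpha^{\beta-1}}{x^\beta+\alpha^\beta} = \frac{\beta(x^\beta-\alpha^\beta)}{\alpha(x^\beta+\alpha^\beta)}$. Multiplying this by $f_{\alpha,\beta}^{\tau} = \beta^\tau\alpha^{\beta\tau}x^{(\beta-1)\tau}/(x^\beta+\alpha^\beta)^{2\tau}$ reproduces immediately the first summand of the claimed expression, so this part is a direct substitution.

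The substantive work is the integral. I would evaluate $\int_0^\infty \frac{\partial \log f_{\alpha,\beta}}{\partial \alpha}\,f_{\alpha,\beta}^{\tau+1}\,dx$ by splitting the numerator $(x^\beta-\alpha^\beta)$ into two monomial integrals of the generic type $\int_0^\infty x^{p}(x^\beta+\alpha^\beta)^{-q}\,dx$. The substitution $t = (x/\alpha)^\beta$ turns each into $\frac{\alpha^{p+1-\beta q}}{\beta}\int_0^\infty t^{(p+1)/\beta-1}(1+t)^{-q}\,dt = \frac{\alpha^{p+1-\beta q}}{\beta}\,B\!\left(\tfrac{p+1}{\beta},\, q-\tfrac{p+1}{\beta}\right)$, reducing the integral to a difference of two Beta functions sharing a common $\alpha$-power. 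Tracking the exponents (here $q = 2\tau+3$), the two Beta functions come out as $B(b+1,a)$ and $B(b,a+1)$, where $a = \frac{\beta\tau+\beta+\tau}{\beta}$, $b = \frac{\beta\tau+\beta-\tau}{\beta}$, and $a+b = 2(\tau+1)$.

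The final step is the Beta-function algebra, which is where the simplification actually happens. Using $B(b+1,a) = \frac{b}{a+b}B(a,b)$ and $B(b,a+1) = \frac{a}{a+b}B(a,b)$, the difference collapses to $\frac{b-a}{a+b}B(a,b) = \frac{-\tau}{\beta(\tau+1)}B(a,b)$, since $b-a = -2\tau/\beta$. Carrying the prefactors through, together with the minus sign in front of the integral, then reproduces exactly $\frac{1}{1+\tau}\frac{\tau}{\alpha}\left(\frac{\beta}{\alpha}\right)^\tau B\!\left(\frac{\beta\tau+\tau+\beta}{\beta},\frac{\beta\tau-\tau+\beta}{\beta}\right)$. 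I expect the main obstacle to be purely bookkeeping: correctly matching the four shifted Beta arguments produced by the substitution to the single recurrence in $b-a$, since one sign or index slip in the exponents of $\alpha$ or in the Beta arguments would break the cancellation. No analytic difficulty arises beyond convergence of the integral: the integrand decays like $x^{-(\tau+1)(\beta+1)}$ at infinity and behaves like $x^{(\beta-1)(\tau+1)}$ near the origin, so it is integrable precisely when $\beta > \tau/(\tau+1)$, which is exactly the condition guaranteeing that the Beta-function arguments above remain positive.
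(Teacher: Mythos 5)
Your proposal is correct, and your treatment of the first summand (computing $\partial \log f_{\alpha,\beta}/\partial\alpha = \beta(x^{\beta}-\alpha^{\beta})/\bigl(\alpha(x^{\beta}+\alpha^{\beta})\bigr)$ and multiplying by $f_{\alpha,\beta}^{\tau}$) coincides with the paper's. Where you genuinely diverge is in the evaluation of the integral term. The paper avoids any direct integration: it observes that
\begin{equation*}
\int \frac{\partial \log f_{\alpha ,\beta }(x)}{\partial \alpha }\,f_{\alpha ,\beta }(x)^{\tau+1}\,dx
=\frac{1}{1+\tau }\,\frac{\partial }{\partial \alpha }\int f_{\alpha ,\beta }(x)^{\tau +1}\,dx ,
\end{equation*}
and then simply differentiates the known normalization integral $\int f_{\alpha ,\beta }^{\tau +1}\,dx=\left( \beta /\alpha \right)^{\tau }B\!\left( \tfrac{\beta \tau +\tau +\beta }{\beta },\tfrac{\beta \tau -\tau +\beta }{\beta }\right)$ quoted from the companion work, for which only the elementary derivative of $(\beta/\alpha)^{\tau}$ in $\alpha$ is needed. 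You instead evaluate the integral from scratch: splitting the numerator $x^{\beta}-\alpha^{\beta}$, substituting $t=(x/\alpha)^{\beta}$ to produce two Beta integrals $B(b+1,a)$ and $B(b,a+1)$ with $a+b=2(\tau+1)$, and collapsing their difference via the recurrences $B(b+1,a)=\tfrac{b}{a+b}B(a,b)$ and $B(b,a+1)=\tfrac{a}{a+b}B(a,b)$ to $\tfrac{b-a}{a+b}B(a,b)=\tfrac{-\tau}{\beta(\tau+1)}B(a,b)$; carrying the prefactors through indeed reproduces the paper's value $-\tfrac{1}{1+\tau}\tfrac{\tau}{\alpha}(\beta/\alpha)^{\tau}B(a,b)$, so your final expression agrees. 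The trade-off: the paper's route is shorter and reuses prior results, but it silently relies on differentiating under the integral sign and on the cited closed form; your route is self-contained, makes the bookkeeping verifiable line by line, and yields as a by-product the precise integrability condition $\beta>\tau/(\tau+1)$ (equivalently, positivity of the second Beta argument), which neither the proposition nor the paper's proof makes explicit.
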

\begin{proof}
		We have, 
	\[
	f_{\alpha ,\beta }(x)=\frac{\beta \alpha ^{\beta }x^{\beta -1}}{\left(
		x^{\beta }+\alpha ^{\beta }\right)^2 }=\frac{\left( \frac{x}{\alpha }\right)
		^{\beta -1}\frac{\beta }{\alpha }}{\left( 1+\left( \frac{x}{\alpha }\right)
		^{\beta }\right) ^{2}}. 
	\]%
	Therefore,%
	\[
	\log f_{\alpha ,\beta }(x)=\left( \beta -1\right) \log \frac{x}{\alpha }%
	+\log \frac{\beta }{\alpha }-2\log \left( 1+\left( \frac{x}{\alpha }\right)
	^{\beta }\right) 
	\]%
	and 
	\begin{eqnarray*}
		\frac{\partial \log f_{\alpha ,\beta }(x)}{\partial \alpha } &=&-\frac{%
			\left( \beta -1\right) }{\alpha }-\frac{1}{\alpha }-\frac{2\beta \left( 
			\frac{x}{\alpha }\right) ^{\beta -1}\left( -\frac{x}{\alpha ^{2}}\right) }{%
			1+\left( \frac{x}{\alpha }\right) ^{\beta }} \\
		&=&\frac{-\beta \alpha ^{\beta }+\beta x^{\beta }}{\alpha \left( \alpha
			^{\beta }+x^{\beta }\right) }.
	\end{eqnarray*}%
	Then 
	\begin{eqnarray*}
		\frac{\partial \log f_{\alpha ,\beta }(x)}{\partial \alpha }f_{\alpha ,\beta
		}(x)^{\tau } &=&\frac{-\beta \alpha ^{\beta }+\beta x^{\beta }}{\alpha
			\left( \alpha ^{\beta }+x^{\beta }\right) }\left( \frac{\beta ^{\tau }\alpha
			^{\beta \tau }X_{i}^{\left( \beta -1\right) \tau }}{\left( \alpha ^{\beta
			}+x^{\beta }\right) ^{2\tau }}\right) \\
		&=&\frac{\beta \left( -\alpha ^{\beta }+x^{\beta }\right) \beta ^{\tau
			}\alpha ^{\beta \tau }X_{i}^{\left( \beta -1\right) \tau }}{\alpha \left(
			\alpha ^{\beta }+x^{\beta }\right) ^{2\tau +1}}.
	\end{eqnarray*}%
	On the other taking into account, see \cite{fejamipa24},
	that, 
	\[
	\int f_{\alpha ,\beta }(x)^{\tau +1}dx=\left( \frac{\beta }{\alpha }\right)
	^{\tau }B\left( \frac{\beta \tau +\tau +\beta }{\beta },\frac{\beta \tau
		-\tau +\beta }{\beta }\right) 
	\]%
	we have, 
	\begin{eqnarray*}
		\int \frac{\partial \log f_{\alpha ,\beta }(x)}{\partial \alpha }f_{\alpha
			,\beta }(x)^{\tau}dx &=&\frac{1}{1+\tau}\frac{\partial }{\partial \alpha }\int f_{\alpha
			,\beta }(x)^{\tau +1}dx \\
		&=&-\frac{1}{1+\tau}\frac{\tau }{\alpha }\left( \frac{\beta }{\alpha }\right) ^{\tau
		}B\left( \frac{\beta \tau +\tau +\beta }{\beta },\frac{\beta \tau -\tau
			+\beta }{\beta }\right) .
	\end{eqnarray*}%
\end{proof}

Similarly, the score function for $\beta$ is given in the next result.
\begin{proposition} \label{prop:score_beta}
		Under a log-logistic model the score function of the DPD with fixed $\tau$ for the scale parameter $\beta$ is given by
	\begin{eqnarray*}
		u_{\tau }(x,\beta )&=&\frac{\partial \log f_{\alpha ,\beta }(x)}{\partial \beta }f_{\alpha ,\beta }(x)^{\tau }-\int \frac{\partial \log f_{\alpha , \beta }(x)}{\partial \beta }f_{\alpha ,\beta }(x)^{\tau +1}dx.\\
		%
		%
		&=&\left( \log \frac{X_{i}%
		}{\alpha }\left( \frac{\alpha ^{\beta }-X_{i}^{\beta }}{\alpha ^{\beta
			}+X_{i}^{\beta }}\right) +\frac{1}{\beta }\right) \frac{\beta ^{\tau }\alpha
			^{\beta \tau }X_{i}^{\left( \beta -1\right) \tau }}{\left( \alpha ^{\beta
			}+X_{i}^{\beta }\right) ^{2\tau }} \\
		&&-\frac{1}{1+\tau}\left\{ \frac{\tau }{\alpha }\left( \frac{\beta }{\alpha }\right) ^{\tau
			-1}B\left( \frac{\beta \tau +\tau +\beta }{\beta },\frac{\beta \tau -\tau
			+\beta }{\beta }\right) \right. \\
		&&\left. +\left( \frac{\beta }{\alpha }\right) ^{\tau }B\left( \frac{\beta
			\tau +\tau +\beta }{\beta },\frac{\beta \tau -\tau +\beta }{\beta }\right)
		\right. \\
		&&\left. \times \frac{\tau }{\beta ^{2}}\left( \psi \left( \frac{\beta \tau
			-\tau +\beta }{\beta }\right) -\psi \left( \frac{\beta \tau -\tau +\beta }{%
			\beta }\right) \right) \right\} .
	\end{eqnarray*}
\end{proposition}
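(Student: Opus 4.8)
The plan is to mirror exactly the argument used for Proposition~\ref{prop:score_alpha}, simply replacing the derivative with respect to $\alpha$ by the derivative with respect to $\beta$. The score function splits into a pointwise part $\frac{\partial \log f_{\alpha,\beta}(x)}{\partial\beta}f_{\alpha,\beta}(x)^{\tau}$ and an integral part $\int \frac{\partial \log f_{\alpha,\beta}(x)}{\partial\beta}f_{\alpha,\beta}(x)^{\tau+1}\,dx$, and I will treat the integral part by differentiating the known closed form of $\int f_{\alpha,\beta}^{\tau+1}(x)\,dx$ under the integral sign, exactly as was done for $\alpha$.

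First I would take the log-density already recorded in the proof of Proposition~\ref{prop:score_alpha}, namely $\log f_{\alpha,\beta}(x) = (\beta-1)\log\frac{x}{\alpha} + \log\frac{\beta}{\alpha} - 2\log\!\left(1+\left(\frac{x}{\alpha}\right)^{\beta}\right)$, and differentiate it in $\beta$. The three summands contribute $\log\frac{x}{\alpha}$, $\frac{1}{\beta}$ and $-2\frac{(x/\alpha)^{\beta}\log(x/\alpha)}{1+(x/\alpha)^{\beta}}$ respectively. Combining the first and third over the common denominator $1+(x/\alpha)^{\beta}$ and multiplying top and bottom by $\alpha^{\beta}$ collapses them to $\log\frac{x}{\alpha}\,\frac{\alpha^{\beta}-x^{\beta}}{\alpha^{\beta}+x^{\beta}}$, which together with $\frac{1}{\beta}$ is precisely the bracketed factor in the statement. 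Multiplying by $f_{\alpha,\beta}(x)^{\tau}=\beta^{\tau}\alpha^{\beta\tau}x^{(\beta-1)\tau}(\alpha^{\beta}+x^{\beta})^{-2\tau}$ then produces the first term of the claimed expression.

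For the integral part I would use the identity $\frac{\partial \log f_{\alpha,\beta}}{\partial\beta}f_{\alpha,\beta}^{\tau+1}=\frac{1}{1+\tau}\frac{\partial}{\partial\beta}f_{\alpha,\beta}^{\tau+1}$, so that interchanging differentiation and integration gives $\int \frac{\partial \log f_{\alpha,\beta}}{\partial\beta}f_{\alpha,\beta}^{\tau+1}\,dx = \frac{1}{1+\tau}\frac{\partial}{\partial\beta}\left[\left(\frac{\beta}{\alpha}\right)^{\tau}B\!\left(\frac{\beta\tau+\tau+\beta}{\beta},\frac{\beta\tau-\tau+\beta}{\beta}\right)\right]$, after inserting the closed form of $\int f_{\alpha,\beta}^{\tau+1}\,dx$ from \cite{fejamipa24}. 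Applying the product rule, the factor $(\beta/\alpha)^{\tau}$ differentiates to $\frac{\tau}{\alpha}(\beta/\alpha)^{\tau-1}$, which yields the first summand inside the braces; the Beta-function factor is handled through $\partial_a B(a,b)=B(a,b)[\psi(a)-\psi(a+b)]$, $\partial_b B(a,b)=B(a,b)[\psi(b)-\psi(a+b)]$ and the chain rule, using that $a=\tau+1+\tau/\beta$ and $b=\tau+1-\tau/\beta$ satisfy $\mathrm{d}a/\mathrm{d}\beta=-\tau/\beta^{2}$ and $\mathrm{d}b/\mathrm{d}\beta=\tau/\beta^{2}$.

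The main computational obstacle is precisely this differentiation of the Beta function, which is the one genuine difference from the $\alpha$ case (there the Beta arguments were $\beta$-free, so no digamma terms arose). The key simplification I would exploit is that the argument sum $a+b=2\tau+2$ is independent of $\beta$, so the two $\psi(a+b)$ contributions cancel and only $\frac{\tau}{\beta^{2}}\bigl(\psi(b)-\psi(a)\bigr)$ survives, explaining both the digamma difference and the factor $\tau/\beta^{2}$ in the second summand. Carrying the overall minus sign from the definition of the score function through both summands then reproduces the displayed formula, up to the evident typographical slip in the final digamma difference, whose two arguments should read $\frac{\beta\tau-\tau+\beta}{\beta}$ and $\frac{\beta\tau+\tau+\beta}{\beta}$ rather than the repeated argument printed in the statement.
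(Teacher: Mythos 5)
Your proposal is correct and takes essentially the same approach as the paper: differentiate the log-density in $\beta$, multiply by $f_{\alpha,\beta}(x)^{\tau}$, and evaluate the integral term via the identity $\frac{\partial \log f_{\alpha,\beta}}{\partial\beta}f_{\alpha,\beta}^{\tau+1}=\frac{1}{1+\tau}\frac{\partial}{\partial\beta}f_{\alpha,\beta}^{\tau+1}$ together with the closed form $\left(\frac{\beta}{\alpha}\right)^{\tau}B\left(\frac{\beta\tau+\tau+\beta}{\beta},\frac{\beta\tau-\tau+\beta}{\beta}\right)$. You even supply the one step the paper leaves implicit (the digamma calculus, where $a+b=2\tau+2$ being free of $\beta$ cancels the $\psi(a+b)$ contributions and produces the factor $\frac{\tau}{\beta^{2}}$), and you correctly flag the typographical error in the printed statement: the final digamma difference should read $\psi\left(\frac{\beta\tau-\tau+\beta}{\beta}\right)-\psi\left(\frac{\beta\tau+\tau+\beta}{\beta}\right)$ rather than a difference of two identical arguments.
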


\begin{proof}
	We can rewrite the partial of the density logartithm as follows
	\begin{eqnarray*}
		\frac{\partial \log f_{\alpha ,\beta }(x)}{\partial \beta } &=&\log \frac{x}{%
			\alpha }+\frac{1}{\beta }-2\frac{\left( \frac{x}{\alpha }\right) ^{\beta
			}\log \frac{x}{\alpha }}{1+\left( \frac{x}{\alpha }\right) ^{\beta }} \\
		&=&\log \frac{x}{\alpha }\left( \frac{\alpha ^{\beta }.x^{\beta }}{\alpha
			^{\beta }+x^{\beta }}\right) +\frac{1}{\beta }
	\end{eqnarray*}%
	and so
	\[
	\frac{\partial \log f_{\alpha ,\beta }(x)}{\partial \beta }f_{\alpha ,\beta
	}(x)^{\tau }=\left( \log \frac{x}{\alpha }\left( \frac{\alpha ^{\beta
		}-x^{\beta }}{\alpha ^{\beta }+x^{\beta }}\right) +\frac{1}{\beta }\right) 
	\frac{\beta ^{\tau }\alpha ^{\beta \tau }x^{\left( \beta -1\right) \tau }}{%
		\left( \alpha ^{\beta }+x^{\beta }\right) ^{2\tau }}. 
	\]%
	On the other hand, 
	\textcolor{black}{\begin{eqnarray*}
		\int \frac{\partial \log f_{\alpha ,\beta }(x)}{\partial \beta }f_{\alpha
			,\beta }(x)^{\tau}dx &=& \frac{1}{1+\tau} \frac{\partial }{\partial \beta }\int f_{\alpha
			,\beta }(x)^{\tau +1}dx \\
		&=&\frac{1}{1+\tau} \frac{\partial }{\partial \beta }\left[ \left( \frac{\beta }{\alpha }%
		\right) ^{\tau }B\left( \frac{\beta \tau +\tau +\beta }{\beta },\frac{\beta
			\tau -\tau +\beta }{\beta }\right) \right] \\
		&=&\frac{1}{1+\tau} \frac{\tau }{\alpha }\left( \frac{\beta }{\alpha }\right) ^{\tau
			-1}B\left( \frac{\beta \tau +\tau +\beta }{\beta },\frac{\beta \tau -\tau
			+\beta }{\beta }\right) \\
		&&+\frac{1}{1+\tau} \left( \frac{\beta }{\alpha }\right) ^{\tau }B\left( \frac{\beta \tau
			+\tau +\beta }{\beta },\frac{\beta \tau -\tau +\beta }{\beta }\right) \\
		&&\times \frac{\tau }{\beta ^{2}}\left( \psi \left( \frac{\beta \tau -\tau
			+\beta }{\beta }\right) -\psi \left( \frac{\beta \tau -\tau +\beta }{\beta }%
		\right) \right) .
	\end{eqnarray*}}%
\end{proof}

The Rao-type test statistics are defined through score functions of $\alpha$ and $\beta,$ $u_{\tau }(X,\alpha)$ and $u_{\tau }(X,\beta).$ Consequently, to understand the asymptotic behavior of the test we first establish the asymptotic distribution of the scores.

\begin{theorem} \label{thm:score}
Consider a random sample $X_1,...X_n,$ from a log-logistic model 
and define the score function $u_{\tau }(x, \alpha)$ and $u_{\tau }(x, \beta)$  for a fixed $\tau$ as in Proposition (\ref{prop:score_alpha}) and (\ref{prop:score_beta}), respectively. 
Then, it holds 
\begin{equation*}
	\begin{aligned}
		\sqrt{n}U_{\tau }(\alpha ) & \overset{\mathcal{L}}{\underset{n\rightarrow \infty }{\longrightarrow }} N\left( 0,K^{11}_{\tau }\left( \alpha, \beta \right) \right) .\\
		\sqrt{n}U_{\tau }(\beta )&\overset{\mathcal{L}}{\underset{n\rightarrow \infty }{\longrightarrow }} N\left( 0,K^{22}_{\tau }\left(\alpha, \beta \right) \right) .
	\end{aligned}
\end{equation*}
for $$U_{\tau }(\alpha ) = \frac{1}{n}\sum \limits_{i=1}^{n}u_{\tau }(X_{i},\alpha)
\text{ and }
U_{\tau }(\beta) = \frac{1}{n}\sum \limits_{i=1}^{n}u_{\tau }(X_{i},\beta),$$
and $K^{ii}_{\tau}, i=1,2$ are the diagonal entries of matrix $\boldsymbol{K}_\tau\left(\alpha, \beta \right)$ defined in (\ref{W1.1421}).
\end{theorem}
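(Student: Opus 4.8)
The plan is to recognize $\sqrt{n}\,U_{\tau}(\alpha)$ as a normalized sum of independent, identically distributed summands and to invoke the classical Lindeberg--L\'evy central limit theorem. Setting $Z_i=u_{\tau}(X_i,\alpha)$, the variables $Z_1,\dots,Z_n$ are i.i.d.\ because $X_1,\dots,X_n$ are i.i.d.\ from $f_{\alpha,\beta}$ and $u_{\tau}(\cdot,\alpha)$ is a fixed measurable function; moreover $\sqrt{n}\,U_{\tau}(\alpha)=n^{-1/2}\sum_{i=1}^{n}Z_i$. The whole argument thus reduces to two facts: that $E[Z_1]=0$ and that $\mathrm{Var}(Z_1)=K^{11}_{\tau}(\alpha,\beta)<\infty$. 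The corresponding computation for the $\beta$-score then yields the second assertion verbatim.

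First I would verify the centering, which is precisely what the subtracted integral term in the density power divergence score achieves. By Proposition~\ref{prop:score_alpha} the constant subtracted from $\tfrac{\partial\log f_{\alpha,\beta}(x)}{\partial\alpha}f_{\alpha,\beta}(x)^{\tau}$ is exactly the component $\xi_{\tau}(\alpha)$ of $\boldsymbol{\xi}_{\tau}$ given in (\ref{xifunc}). Taking the expectation under $f_{\alpha,\beta}$,
\[
E\!\left[\frac{\partial\log f_{\alpha,\beta}(X)}{\partial\alpha}\,f_{\alpha,\beta}(X)^{\tau}\right]=\int\frac{\partial\log f_{\alpha,\beta}(x)}{\partial\alpha}\,f_{\alpha,\beta}(x)^{\tau+1}\,dx=\xi_{\tau}(\alpha),
\]
so the two pieces cancel and $E[Z_1]=0$.

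Next I would compute the variance. Since $E[Z_1]=0$ we have $\mathrm{Var}(Z_1)=E[Z_1^2]$, and expanding the square while reusing the identity above gives
\[
E[Z_1^2]=\int\!\left(\frac{\partial\log f_{\alpha,\beta}(x)}{\partial\alpha}\right)^{\!2}\! f_{\alpha,\beta}(x)^{2\tau+1}\,dx-2\,\xi_{\tau}(\alpha)^2+\xi_{\tau}(\alpha)^2=J^{11}_{2\tau}(\alpha,\beta)-\xi_{\tau}(\alpha)^2,
\]
the leading integral being $J^{11}_{2\tau}$ by the definition in (\ref{J}) with $\tau$ replaced by $2\tau$. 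By the structure of $\boldsymbol{K}_{\tau}$ recorded in (\ref{W1.1421}) the right-hand side is exactly $K^{11}_{\tau}(\alpha,\beta)$, so $\mathrm{Var}(Z_1)=K^{11}_{\tau}(\alpha,\beta)$. Repeating the same manipulation with $\partial_{\beta}$ in place of $\partial_{\alpha}$, using Proposition~\ref{prop:score_beta}, yields $\mathrm{Var}(u_{\tau}(X_1,\beta))=J^{22}_{2\tau}(\alpha,\beta)-\xi_{\tau}(\beta)^2=K^{22}_{\tau}(\alpha,\beta)$.

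The main obstacle is not the algebra but justifying finiteness of the second moment so that the central limit theorem applies: one must check that $J^{11}_{2\tau}(\alpha,\beta)$ and $J^{22}_{2\tau}(\alpha,\beta)$ are finite. This follows from the closed-form beta-function expressions for the entries of $\boldsymbol{J}_{2\tau}$ in (\ref{W1.1421}), which are finite for all admissible $(\alpha,\beta)\in\mathbb{R}^{+}\times\mathbb{R}^{+}$ and all $\tau\ge0$ keeping the beta arguments positive; equivalently it is ensured by the regularity conditions of \cite{bahahjjo98} under which $\boldsymbol{K}_{\tau}$ is well defined. With finite variance secured, Lindeberg--L\'evy delivers $n^{-1/2}\sum_{i=1}^{n}Z_i\overset{\mathcal{L}}{\longrightarrow}N\!\left(0,K^{11}_{\tau}(\alpha,\beta)\right)$, and identically for the $\beta$-score, completing the argument.
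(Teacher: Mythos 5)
Your proposal is correct and follows essentially the same route as the paper's proof: recognize $\sqrt{n}\,U_{\tau}(\alpha)$ as a normalized sum of i.i.d.\ terms, verify $E[u_{\tau}(X,\alpha)]=0$ (the subtracted integral is exactly the mean of the first term), compute the variance as $J^{11}_{2\tau}(\alpha,\beta)-\xi_{\tau}(\alpha)^{2}=K^{11}_{\tau}(\alpha,\beta)$, and invoke the central limit theorem. Your write-up is in fact slightly more careful than the paper's, since you explicitly identify the centering constant with $\xi_{\tau}(\alpha)$ from (\ref{xifunc}) and address finiteness of the second moment via the beta-function expressions, a point the paper leaves implicit.
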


\begin{proof}
We only prove the result for the score of $\alpha,$ as the proof for $\beta$ follows similar steps.
We consider the random variable $u_{\tau }(X,\alpha)$ where $\tau$ is fixed.
Firs, it is clear that,
\begin{equation*}
E\left[ u_{\tau }(X,\alpha )\right] =0
\end{equation*}
and
\begin{eqnarray*}
V\left[ u_{\tau }(X,\alpha )\right]  &= & E\left[ \left( u_{\tau }(X,\alpha )\right) ^{2}\right] -E\left[ u_{\tau }(X,\alpha )\right] ^{2} \\
& = & E\left[ \left( \frac{\partial \log f_{\alpha ,\beta }(X)}{\partial \alpha }f_{\alpha ,\beta }(X)^{\tau }-\int \frac{\partial \log f_{\alpha ,\beta }(x)}{\partial \alpha }f_{\alpha ,\beta }(x)^{\tau +1}dx\right)^{2}\right]
\\
& = &E\left[ \left( \frac{\partial \log f_{\alpha ,\beta }(X)}{\partial \alpha }\right) ^{2}f_{\alpha ,\beta }(X)^{2\tau }\right] +\left( \int \frac{\partial \log f_{\alpha ,\beta }(x)}{\partial \alpha }f_{\alpha ,\beta }(x)^{\tau +1}dx\right) ^{2} \\
& & -2E\left[ \left( \frac{\partial \log f_{\alpha ,\beta }(X)}{\partial \alpha }\right) f_{\alpha ,\beta }(X)^{\tau }\int \frac{\partial \log f_{\alpha ,\beta }(x)}{\partial \alpha }f_{\alpha ,\beta }(x)^{\tau +1}dx\right] \\
& = & E\left[ \left( \frac{\partial \log f_{\alpha ,\beta }(X)}{\partial \alpha }\right) ^{2}f_{\alpha ,\beta }(X)^{2\tau }\right] -E\left[ \left( \frac{\partial \log f_{\alpha ,\beta }(X)}{\partial \alpha }\right) f_{\alpha
,\beta }(X)^{\tau }\right] \\
& = & K^{11}_{\tau }\left( \alpha, \beta \right) .
\end{eqnarray*}
Applying the Central Limit Theorem, the result holds.
\end{proof}

Let us first consider the simple null hypotheses given in (\ref{W1.9}) and (\ref{W1.10}) for known shape and scale parameters, $\beta^\ast$ and $\alpha^\ast,$ respectively. 

\begin{definition}
	The Rao-type test statistic based on the MDPDE with tuning parameter $\tau$ for testing (\ref{W1.9}) and (\ref{W1.10}) are given by
	\begin{equation}
		\begin{aligned}
			R_\tau(\alpha_0) &= nU_{\tau }\left( \alpha_0 \right) K^{11}_{\tau}\left( \alpha_0, \beta^\ast \right) ^{-1}\\
			R_\tau(\beta_0) &= nU_{\tau }\left( \beta_0 \right) K^{22}_{\tau }\left( \alpha^\ast, \beta_0 \right) ^{-1}
		\end{aligned}
	\end{equation}
	where $\alpha^\ast$ and $\beta^\ast$ are assumed to be known.
\end{definition}

By Theorem (\ref{thm:score}), we straightforward have the asymptotic convergence of both tests under the null hypothesis as follows
\begin{equation}
R_\tau(\alpha_0) = nU_{\tau }\left( \alpha_0 \right) K^{11}_{\tau}\left( \alpha_0, \beta^\ast \right) ^{-1}\overset{\mathcal{L}}{\underset{n\rightarrow \infty }{\longrightarrow }}\chi _{1}^{2}. \label{W1.11}
\end{equation}
and
\begin{equation}
R_\tau(\beta_0) =  nU_{\tau }\left( \beta \right) K^{22}_{\tau }\left(\alpha^\ast \beta \right) ^{-1}\overset{\mathcal{L}}{\underset{n\rightarrow \infty }{\longrightarrow }}\chi _{1}^{2} \label{W1.12}
\end{equation}

\textcolor{black}{The Rao-type test statistic for simple null hypothesis does not require any estimator of the model parameter, but an estimator of the asymptotic variance-covariance matrix of the test. Thus, the robustness of the test will be hirited from this matrix.} 

Finally, we consider the case of two unknown parameters and simple null hypothesis as given in (\ref{W1.5}). In that case, we consider the score vector as 
$
\mathbf{U}_{\tau }\left( \alpha ,\beta \right) =\left( U_{\tau }\left( \alpha \right) ,U_{\tau }\left( \beta \right) \right) ^{T}.
$
Then, it is immediate to see that
\begin{equation*}
\sqrt{n}\mathbf{U}_{\tau }\left( \alpha ,\beta \right) \overset{\mathcal{L}}{\underset{n\rightarrow \infty }{\longrightarrow }}N\left( 0,\boldsymbol{K}_{\tau }(\alpha ,\beta )\right)
\end{equation*}
with $\boldsymbol{K}_{\tau }(\alpha ,\beta )$ given in (\ref{W1.1421}).

\begin{definition}
		The Rao-type test statistic based on the MDPDE with tuning parameter $\tau$ for testing the simple null hypothesis (\ref{W1.5}) is given by
	\begin{equation}
			\boldsymbol{R}_\tau(\alpha_0,\beta_0) = n\mathbf{U}_{\tau }\left( \alpha_0 ,\beta_0 \right) \boldsymbol{K}_{\tau }(\alpha_0 ,\beta_0 )\mathbf{U}_{\tau }\left( \alpha_0 ,\beta_0 \right)^{T}.
	\end{equation}
	where $\mathbf{U}_{\tau }\left( \alpha ,\beta \right) =\left( U_{\tau }\left( \alpha \right) ,U_{\tau }\left( \beta \right) \right) ^{T}$ 
	and 
	 $\boldsymbol{K}_{\tau }(\alpha ,\beta )$ is given in (\ref{W1.1421}).
\end{definition}

Under the null hypothesis, it holds that
\begin{equation*}
	\boldsymbol{R}_\tau(\alpha_0,\beta_0) = \boldsymbol{R}_\tau(\alpha_0,\beta_0) = n\mathbf{U}_{\tau }\left( \alpha_0 ,\beta_0 \right) \boldsymbol{K}_{\tau }(\alpha_0 ,\beta_0)\mathbf{U}_{\tau }\left( \alpha ,\beta \right) ^{T}\overset{\mathcal{L}}{\underset{n\rightarrow \infty }{\longrightarrow }}\chi _{2}^{2}.
\end{equation*}

Hence, the Rao-type tests for testing the simple null hypotheses considered in (\ref{W1.5}), (\ref{W1.9}) and (\ref{W1.10}) reject the null hypothesis if

\begin{equation*}
nU_{\tau }\left( \alpha _{0}\right)^2 K_{\tau }^{11}\left( \alpha _{0}\right)^{-1}>\chi _{1,\alpha }^{2},
\end{equation*}

\begin{equation*}
nU_{\tau }\left( \beta _{0}\right)^2 K_{\tau }^{22}\left( \beta _{0}\right)^{-1}>\chi _{1,\alpha }^{2}
\end{equation*}
and

\begin{equation*}
n\mathbf{U}_{\tau }\left( \alpha _{0},\beta _{0}\right) \boldsymbol{K}_{\tau }(\alpha _{0},\beta _{0})\mathbf{U}_{\tau }\left( \alpha _{0}, \beta_{0}\right) ^{T}>\chi _{2}^{2},
\end{equation*}
respectively.

\subsection{Composite null hypothesis}

To introduce the Rao-type tests for evaluating the composite null hypothesis, it is essential to first define the restricted MDPDE (RMDPDE).
%
Restricted minimum distance estimators were introduced for the first time in \cite{papazo02} and since then, numerous studies have been published in this area, see for example \cite{bamamap18}.

For the log-logistic model with parameters $\boldsymbol{\theta }=\left( \alpha ,\beta \right) $ the RMDPDE, denoted by $\widetilde{\boldsymbol{\theta }}_{\tau } = \left( \widetilde{\alpha }_{\tau }, \widetilde{\beta }_{\tau }\right),$ is defined by
\begin{equation*}
\widetilde{\boldsymbol{\theta }}_{\tau }=\arg \max_{\boldsymbol{\theta }\in \boldsymbol{\Theta }_{0}}H_{n,\tau }(\boldsymbol{\theta }).
\end{equation*}
where $\Theta_0$ is a restricted parameter space of the form (\ref{W.Null}).

Some main asymptotic properties of the RMDPDE were established\cite{bamamap18} as follows:
\begin{enumerate}
\item If $\boldsymbol{\theta }_{0} \in \boldsymbol{\Theta }_{0},$ then the RMDPDE estimating equation has a consistent sequence of roots $\widetilde{\boldsymbol{\theta }}_{\tau }$ such that

\begin{equation*}
\widetilde{\boldsymbol{\theta }}_{\tau }\overset{\mathcal{P}}{\underset{n\rightarrow \infty }{\longrightarrow }}\boldsymbol{\theta }_{0}.
\end{equation*}

\item The asymptotic null distribution of $\widetilde{\boldsymbol{\theta }}_{\tau }$ is given by

\begin{equation*}
\sqrt{n}\left( \widetilde{\boldsymbol{\theta }}_{\tau }-\boldsymbol{\theta }_{0}\right) \overset{\mathcal{L}}{\underset{n\rightarrow \infty }{\longrightarrow }}\mathcal{N}\left( \boldsymbol{0},\boldsymbol{\Sigma }_{\tau }\left( \boldsymbol{\theta }_{0}\right) \right) ,
\end{equation*}
where

\begin{eqnarray*}
\boldsymbol{\Sigma }_{\tau }\left( \boldsymbol{\theta }_{0}\right) & = &\boldsymbol{P}_{\tau }\left( \boldsymbol{\theta }_{0}\right) \boldsymbol{K}_{\tau }\left( \boldsymbol{\theta }_{0}\right) \boldsymbol{P}_{\tau }\left( \boldsymbol{\theta }_{0}\right) \\
\boldsymbol{P}_{\tau }\left( \boldsymbol{\theta }_{0}\right) & = & \boldsymbol{J}_{\tau }\left( \boldsymbol{\theta }_{0}\right) ^{-1}-\boldsymbol{Q}_{\tau }\left( \boldsymbol{\theta }_{0}\right) \boldsymbol{M}_{\tau }^{T}\left( \boldsymbol{\theta }_{0}\right) \boldsymbol{J}_{\tau }\left( \boldsymbol{\theta }_{0}\right) ^{-1} \\
\boldsymbol{Q}_{\tau }\left( \boldsymbol{\theta }_{0}\right) & = & \boldsymbol{J}_{\tau }\left( \boldsymbol{\theta }_{0}\right) ^{-1}{}_{\tau }\left( \boldsymbol{\theta }_{0}\right) \left[ \boldsymbol{M}_{\tau }\left(
\boldsymbol{\theta }_{0}\right) ^{T}\boldsymbol{J}_{\tau }\left( \boldsymbol{\theta }_{0}\right) ^{-1}\boldsymbol{M}_{\tau }\left( \boldsymbol{\theta }_{0}\right) \right] ^{-1}.
\end{eqnarray*}
\end{enumerate}

Therefore, following similar steps as in Theorem (\ref{thm:score}), it can be established that the score vector at the RMDPDE satisfies:
\begin{equation*}
\sqrt{n}\mathbf{U}_{\tau }\left( \widetilde{\alpha }_{\tau },\widetilde{\beta }_{\tau }\right) \overset{\mathcal{L}}{\underset{n\rightarrow \infty }{\longrightarrow }}\mathcal{N}\left( \boldsymbol{0},\boldsymbol{K}_{\tau
}\left( \boldsymbol{\theta }_{0}\right) \right).
\end{equation*}

\begin{definition}
	The Rao-type test statistic based on the MDPDE with tuning parameter $\tau$ for testing the composite null hypothesis (\ref{W1.5_2}) is given by
\begin{equation}
	R_{n}\left( \widetilde{\boldsymbol{\theta }}_{\tau }\right) = n\mathbf{U}_{\tau }\left( \widetilde{\boldsymbol{\theta }}_{\tau }\right) ^{T} \boldsymbol{Q}_{\tau }\left( \widetilde{\boldsymbol{\theta }}_{\tau }\right) \left[ \boldsymbol{Q}_{\tau }\left( \widetilde{\boldsymbol{\theta }}_{\tau }\right) ^{T}\boldsymbol{K}_{\tau }\left( \widetilde{\boldsymbol{\theta }}_{\tau }\right) ^{-1}\boldsymbol{Q}_{\tau }\left( \widetilde{\boldsymbol{\theta }}_{\tau }\right) \right] ^{-1}\boldsymbol{Q}_{\tau }\left( \widetilde{\boldsymbol{\theta }}_{\tau }\right) ^{T}\mathbf{U}_{\tau }\left( \widetilde{\boldsymbol{\theta }}_{\tau }\right)
\end{equation}
where $\mathbf{U}_{\tau }\left( \alpha ,\beta \right) =\left( U_{\tau }\left( \alpha \right) ,U_{\tau }\left( \beta \right) \right) ^{T}$ 
and 
$\boldsymbol{K}_{\tau }(\alpha ,\beta )$ is given in (\ref{W1.1421}).
\end{definition}
(see \cite{baghmapa22} for more details).

Consequently, the Rao-type test statistic based on the MDPDE with tuning paramter $\tau$ follows asymptotically a chi-square distribution with $r$ degrees of freedom.
\begin{equation*}
R_{n}\left( \widetilde{\boldsymbol{\theta }}_{\tau }\right) 
\overset{\mathcal{L}}{\underset{n\rightarrow \infty }{\longrightarrow }}\chi _{r}^{2}
\end{equation*}
and we must reject the null hypothesis (\ref{W1.5_2}) if
\begin{equation*}
R_{n}\left( \widetilde{\boldsymbol{\theta }}_{\tau }\right) >\chi _{r,\alpha }^{2}.
\end{equation*}

\section{Simulation Study}

To evaluate the performance of this new families of test statistics, we conduct an extensive simulation study.
We have consider a log-logistic distribution with true scale and shape parameters $\alpha =1$ and $\beta =5.$ 
To evaluate the performance of the test statistics based on the MDPDE, we consider a simple null hypothesis on the scale parameter $\alpha$, and assume the shape $\beta =5$ is known. 

The simulation is conducted as follows: first, we generate a sample of size $n$ of a log-logistic with parameters $(\alpha, \beta) = (1, 5).$ The sample size ranges from $n = 20, 30, ..., 100.$ 
For each generated sample, we obtain the corresponding MDPDEs $\widehat{\alpha}_{\tau}$ of $\alpha$ for different values of the tuning parameter $\tau$ ranging from $\tau=0, 0.1, ..., 1.$ Based on the MDPDEs, we compute the Wald-type and Rao-type test statistics $W_{n,\tau}(\alpha ),$  given by
$$ W_{n,\tau }(\hat{\alpha}_\tau)= n (\hat{\alpha}_\tau -\alpha_0)^2 \frac{ J^{11}_{\tau}(\alpha_0, 5)^2}{ K^{11}_{\tau}(\alpha_0,5)},$$ 
and
$$ R_{n,\tau }(\hat{\alpha}_\tau)= K^{11}_{\tau}(\alpha_0, 5)^{-1}\left\{\sum_{i=1}^{n} \frac{5(-\alpha_0^5+X_i^5)5^\tau \alpha_0^{5\tau}X_i^{4\tau}}{\alpha_0(\alpha_0^5+X_i^5)^{2\tau+1}} + \frac{\tau}{\tau+1}\frac{\beta^5}{\alpha_0^6} B\left(\frac{6\tau + 5}{5}, \frac{4\tau+5}{5}\right) \right\},$$ 
where $J^{11}_{\tau}(\alpha, \beta)$ and $K^{11}_{\tau}(\alpha, \beta)$ are defined in (\ref{W1.1421}).


To assess the empirical significance level, we consider the following null hypothesis
\begin{equation}\label{simplenocontcase1}
H_0: \alpha =\alpha_0 :=1, H_1: \alpha \ne 1,
\end{equation}

Moreover, to evaluate the performance of the test statistic under contamination, we introduce an $\varepsilon\%$ of outlying observations on the sample, with the contamination proportion ranging from $\varepsilon = 0\%,...,20\%.$ Outlying observations are generated from a log-logistic distribution with true scale parameter $\widetilde{\alpha} = 3, 6$ and the population shape $\beta = 5.$

At a significance level of $\alpha = 5\%$, the  test  based on the Wald-type statistic with tuning parameter $\tau$ rejects the null hypothesis if $W_{n,\tau }(\widehat{\alpha}_\tau )> \chi^2_{1;0.05},$ and similarly, the test based on the Rao-type statistic with tuning parameter $\tau$ rejects the null hypothesis if $R_{n,\tau }(\widehat{\alpha}_\tau )> \chi^2_{1;0.05},$
We repeat the simulation for $R=10.000$ times and we compute the empirical level as the averaged number of rejections.


%
%
%


Figures \ref{fig:sample_size} and \ref{fig:sample_size_rao} show the empirical level of the Wald-type and Rao-type test statistics for a grid of $\tau$ values in the absence of contamination (left) and under a $15\%$ of outlying data for increasing sample size. In the absence of contamination, all test statistics exhibit similar performance and converge to the nominal significance level as the sample size increases. However, for small sample sizes, the superiority of the MLE becomes more evident. In contrast, in the presence of outlying observations, the MLE is significantly affected, with its deterioration becoming more pronounced as the sample size increases. Remarkably, the empirical level of the classical Wald test is always over 0.4 (rejecting the null hypothesis) and reaches high values around 0.9.
 On the other hand, the robust MDPDEs with moderate and high values of the tuning parameter, above $\tau=0.3,$ keep competitive with low empirical level under contamination, thus demonstrating the advantage of the robust test in contaminated scenarios.  

\begin{figure}[htb]
	\begin{subfigure}[c]{0.53\textwidth}
		\includegraphics[scale=0.28]{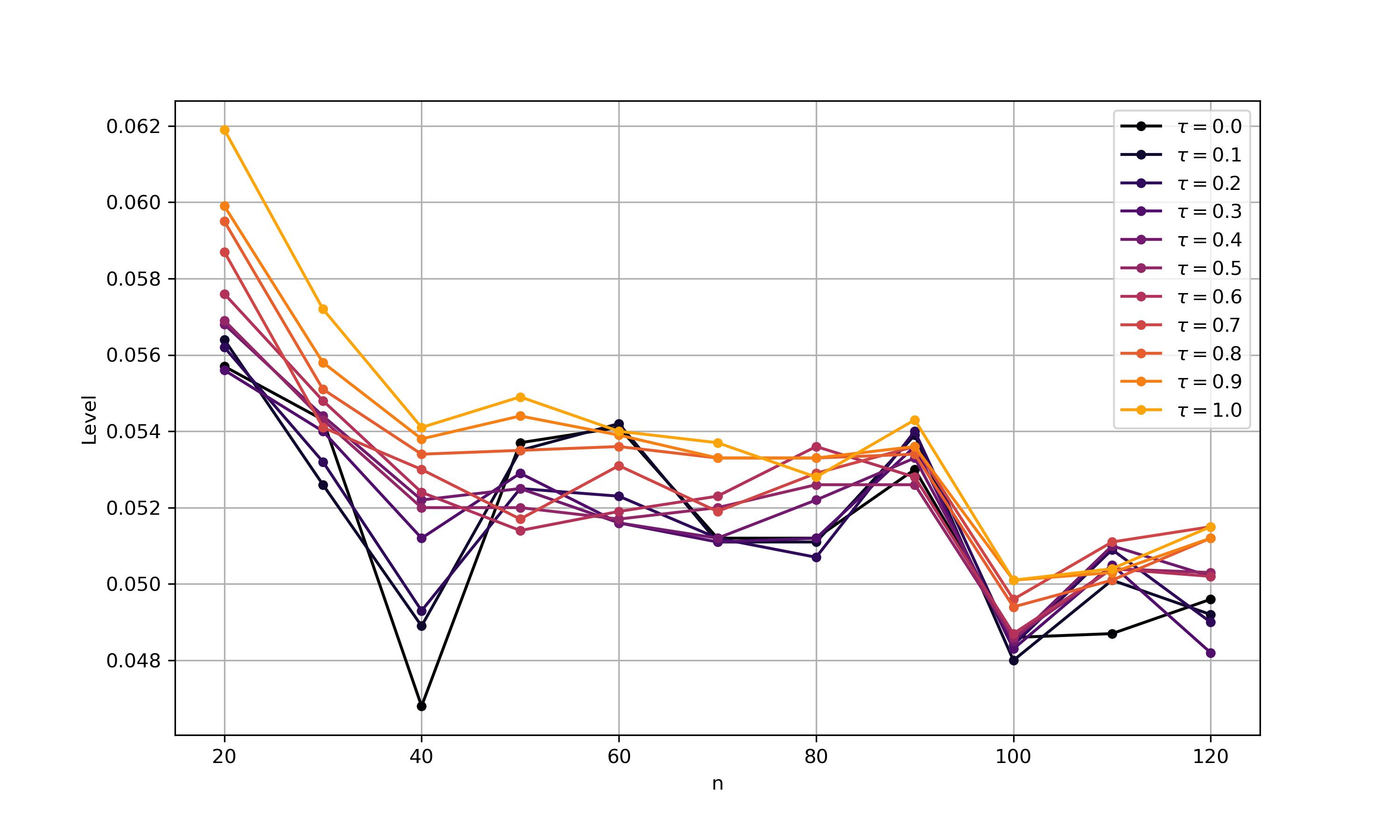}
		\subcaption{No contamination}
	\end{subfigure}
	\begin{subfigure}[c]{0.48\textwidth}
		\includegraphics[scale=0.28]{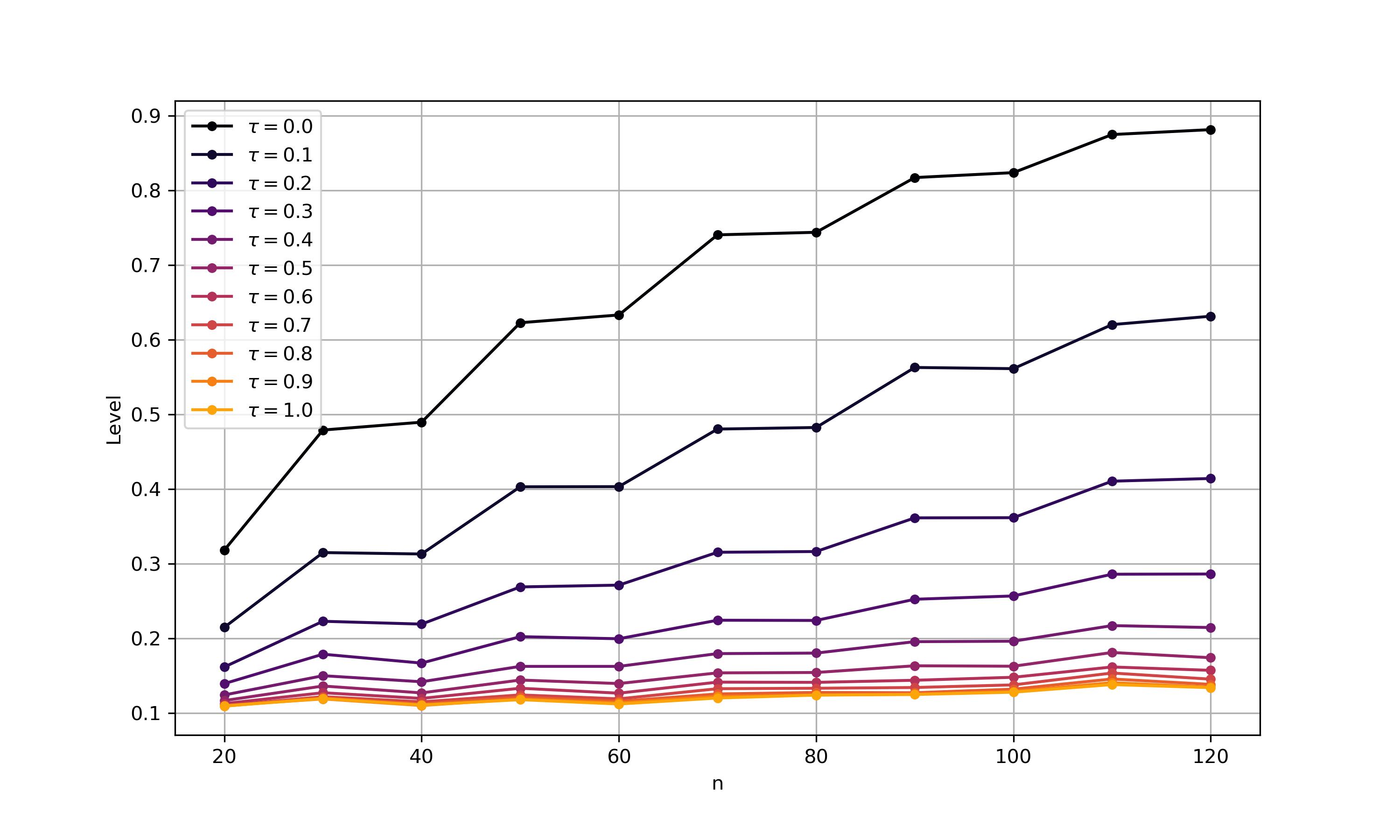}
		\subcaption{$15\%$ contamination}
	\end{subfigure}
	\caption{Empirical level of the Wald-type tests under increasing sample size for different values of $\tau$ in the absence of contamination (left) and with a $15\%$ contamination proportion (right) and $\widetilde{\alpha} = 3$.}
	\label{fig:sample_size}
\end{figure}

\begin{figure}[htb]
	\begin{subfigure}[c]{0.53\textwidth}
		\includegraphics[scale=0.28]{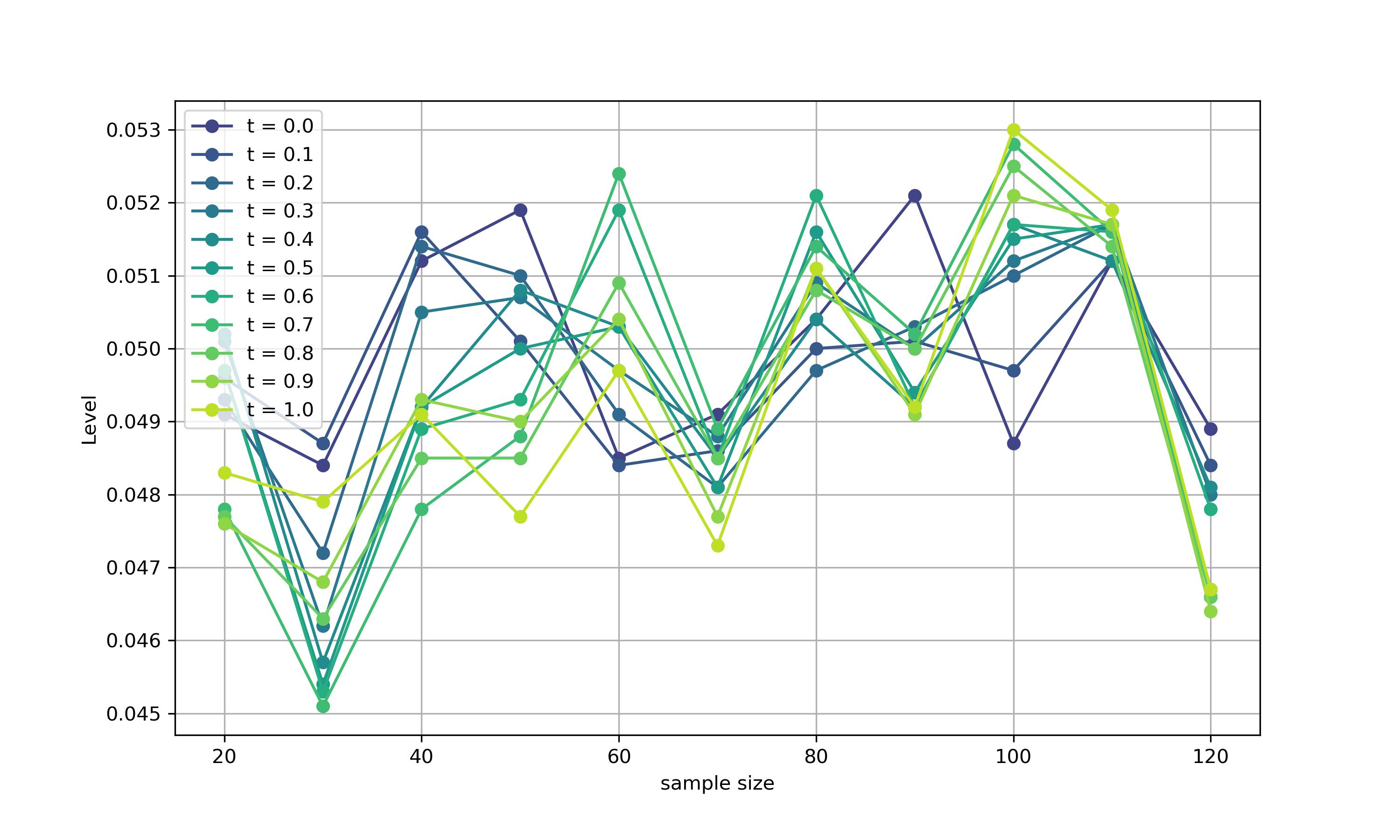}
		\subcaption{No contamination}
	\end{subfigure}
	\begin{subfigure}[c]{0.48\textwidth}
		\includegraphics[scale=0.28]{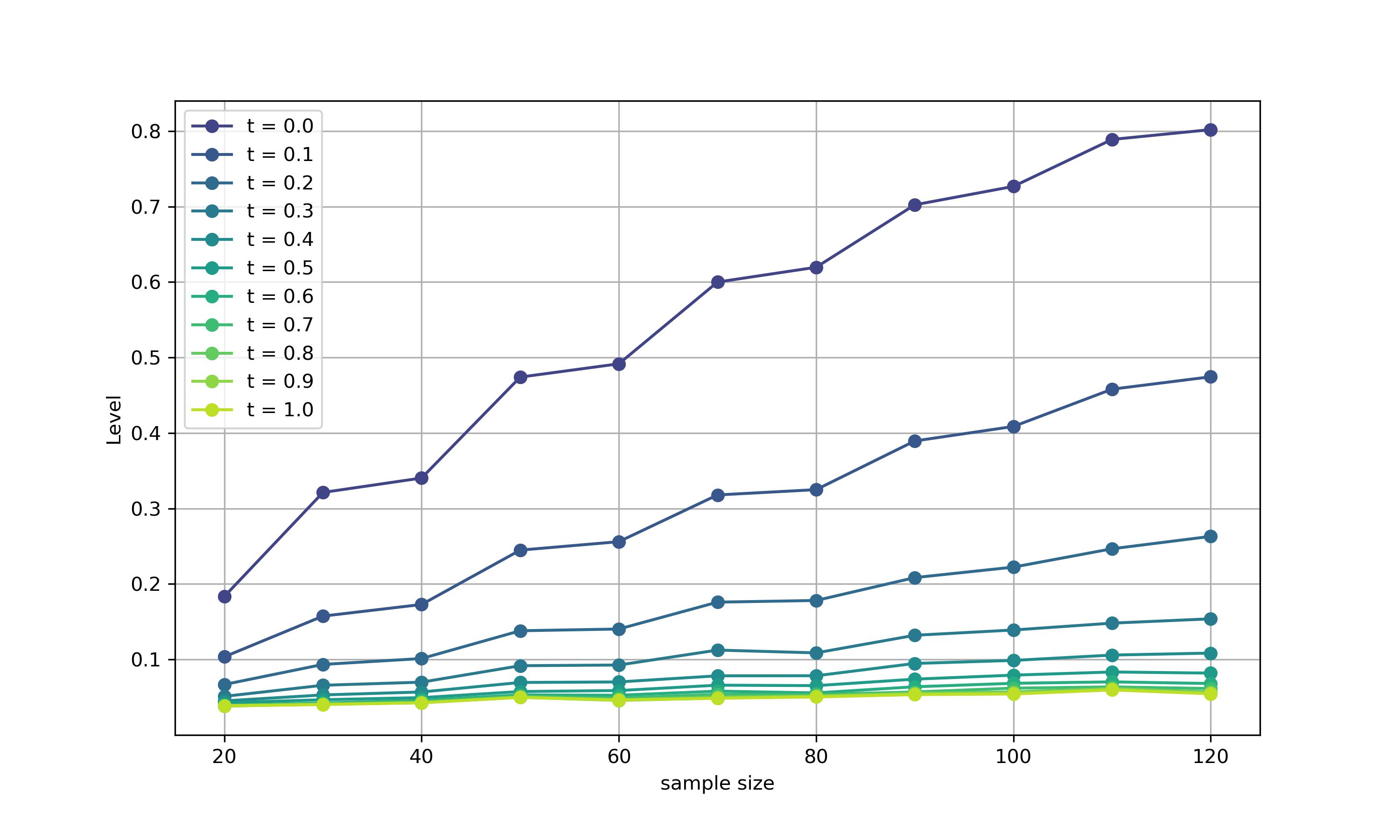}
		\subcaption{$15\%$ contamination}
	\end{subfigure}
	\caption{Empirical level of the Rao-type tests under increasing sample size for different values of $\tau$ in the absence of contamination (left) and with a $15\%$ contamination proportion (right) and $\widetilde{\alpha} = 3$.}
	\label{fig:sample_size_rao}
\end{figure}

Figures \ref{fig:levelcont3} and \ref{fig:levelcont3_rao} plots the empirical level of the test against sample contamination for the same grid of $\tau$ values and sample size of $n=50$ (left) and $n=100$ (right), where the outlying observations where generated from a log-logistic distribution with scale $\widehat{\alpha} = 3$ (moderate outliers). The same plots but setting $\widehat{\alpha} = 6$ (strong outliers) are displayed in figures \ref{fig:levelcont6} and \ref{fig:levelcont6_cont}.
 The classical test based on the MLE (corresponding to $\tau=0$) gets rapidly affected by sample contamination, and even a small amount of contamination can alter the test decision significantly. In particular, above $5\%$ the empirical level exceeds  0.15. Moreover, as the proportion of outliers increases, the empirical level continues to rise, leading to a higher likelihood of incorrectly rejecting the null hypothesis. Conversely, the Wald-type and Rao-type test statistics based on MDPDEs with moderate and high values of the tuning parameter remain slightly affected by this outlying observations and result in much smaller empirical levels.

\begin{figure}[htb]
	\begin{subfigure}[c]{0.53\textwidth}
		\includegraphics[scale=0.28]{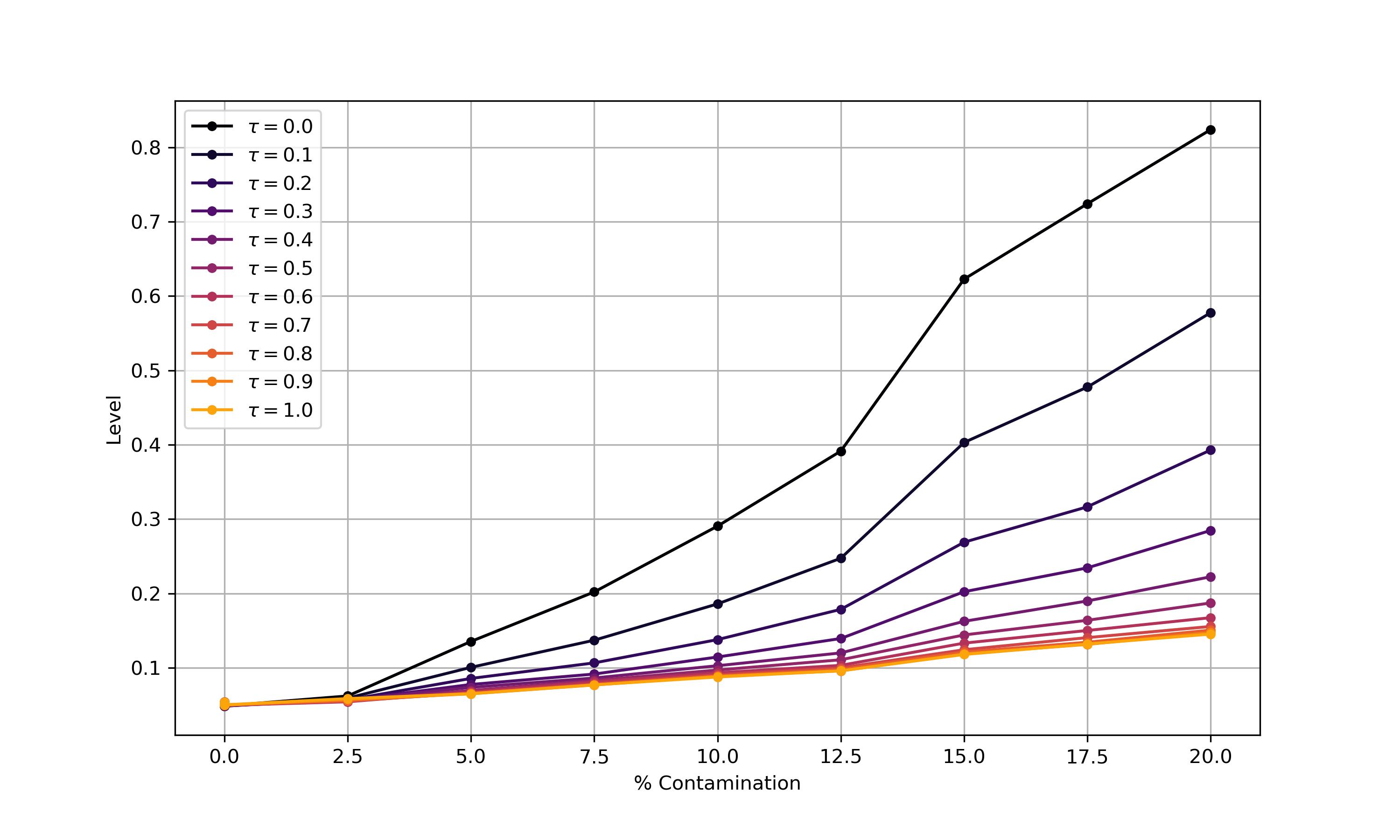}
		\subcaption{$n=50$}
	\end{subfigure}
	\begin{subfigure}[c]{0.48\textwidth}
		\includegraphics[scale=0.28]{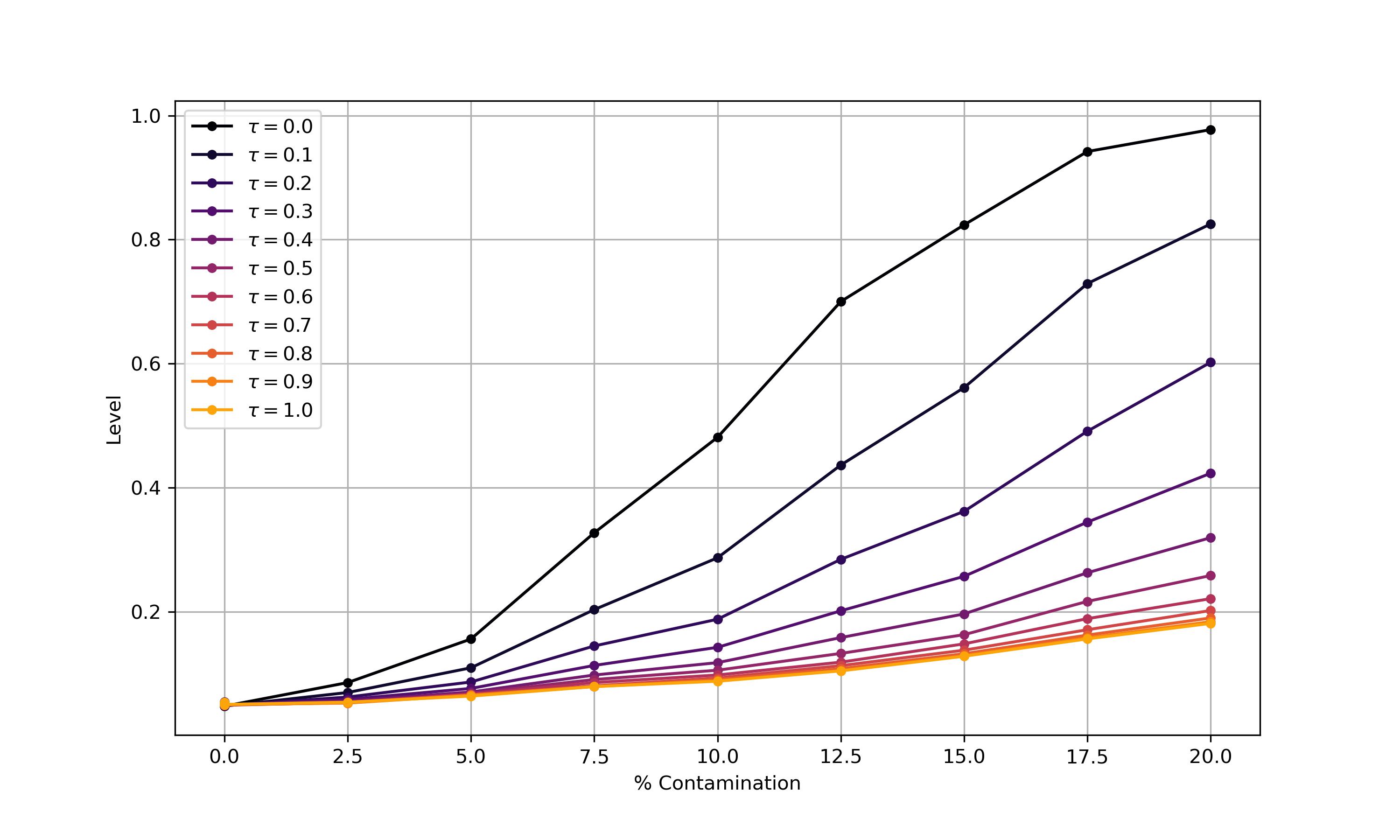}
		\subcaption{$n=100$}
	\end{subfigure}
	\caption{Empirical level of the Wald-type test statistics with $n=50$ (left) and $n=100$ (rigth) under increasing contamination proportion with $\widetilde{\alpha } = 3$ under different values of $\tau$.}
	\label{fig:levelcont3}
\end{figure}

\begin{figure}[htb]
	\begin{subfigure}[c]{0.53\textwidth}
		\includegraphics[scale=0.28]{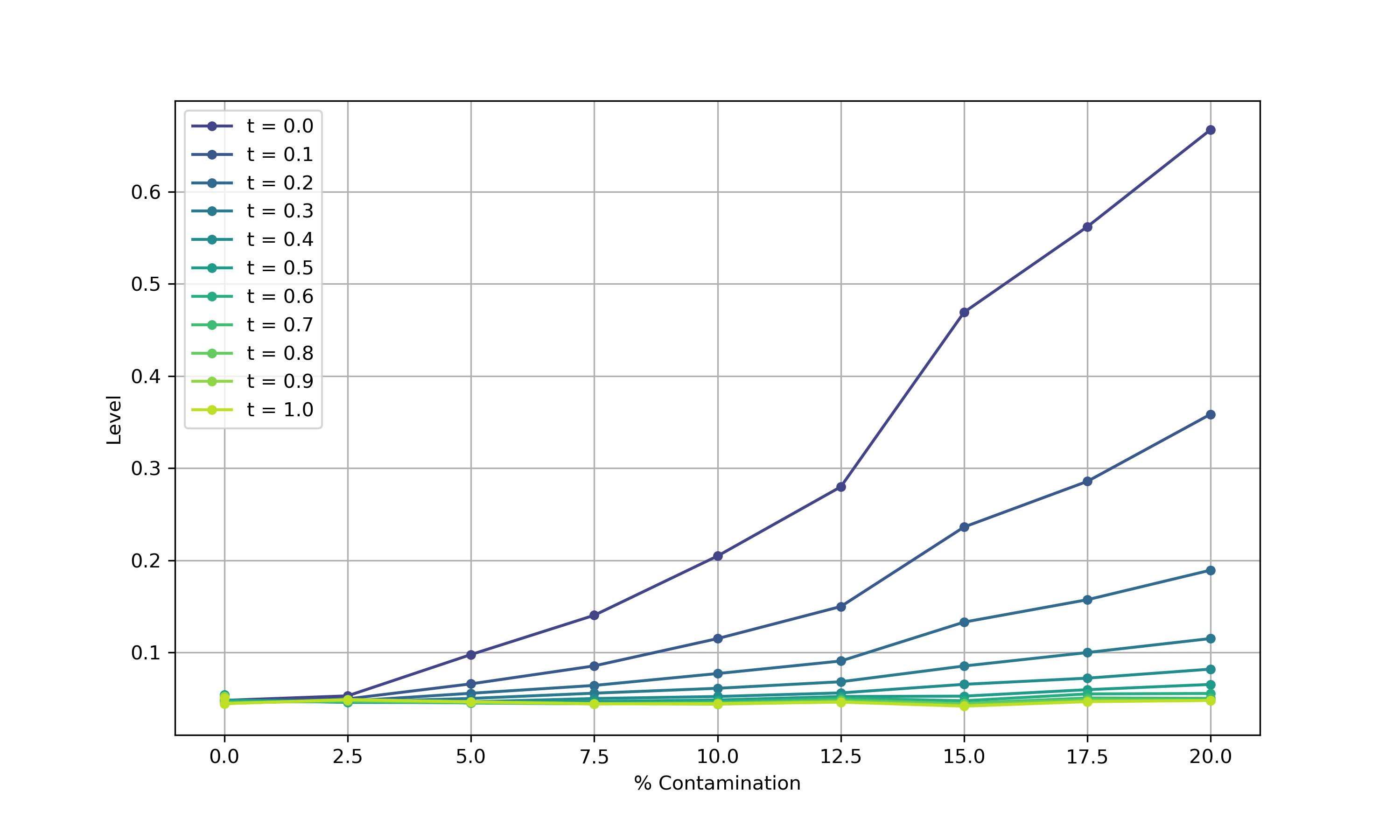}
		\subcaption{$n=50$}
	\end{subfigure}
	\begin{subfigure}[c]{0.48\textwidth}
		\includegraphics[scale=0.28]{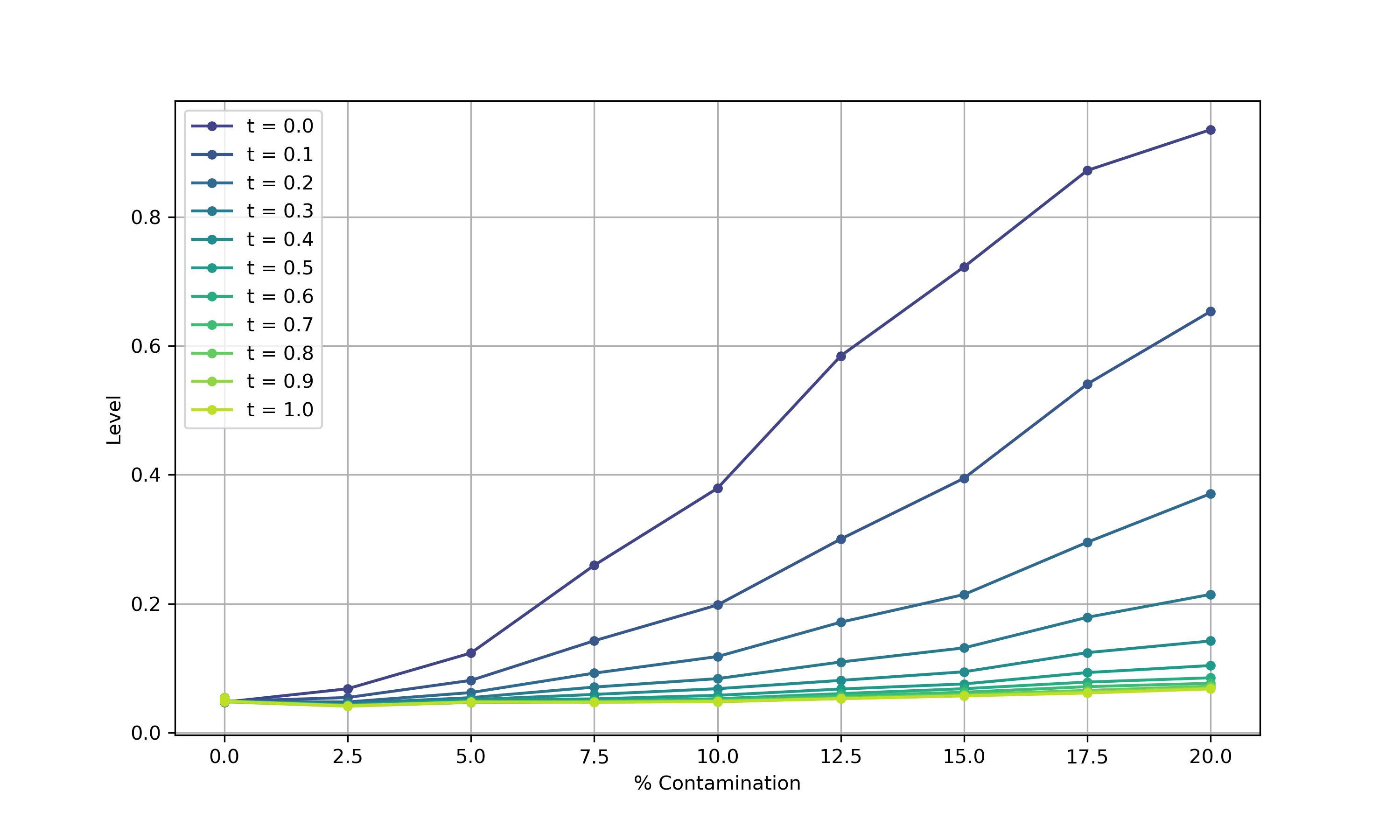}
		\subcaption{$n=100$}
	\end{subfigure}
	\caption{Empirical level of the Rao-type test statistics with $n=50$ (left) and $n=100$ (rigth) under increasing contamination proportion with $\widetilde{\alpha } = 3$ under different values of $\tau$.}
	\label{fig:levelcont3_rao}
\end{figure}

\begin{figure}[htb]
	\begin{subfigure}[c]{0.53\textwidth}
		\includegraphics[scale=0.28]{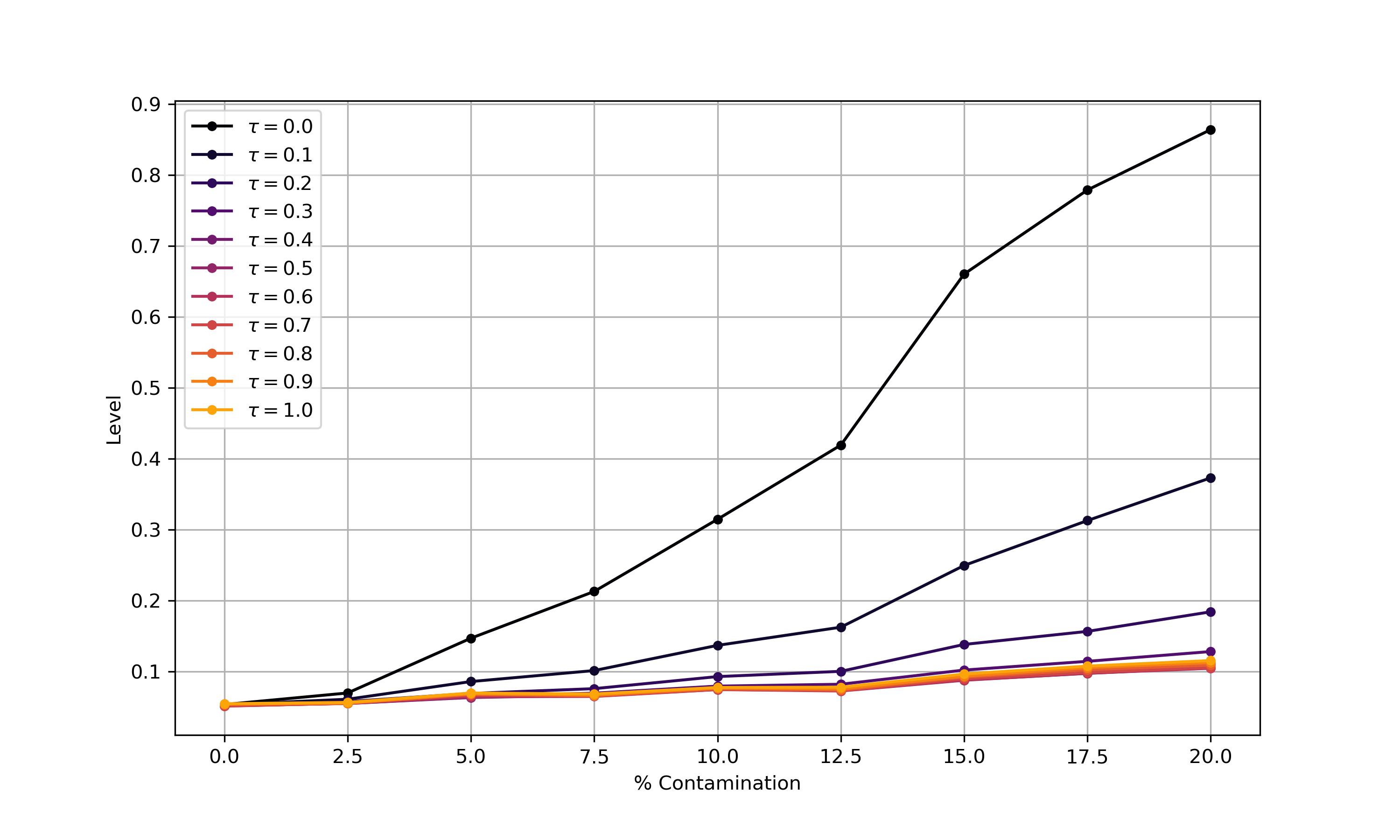}
		\subcaption{$n=50$}
	\end{subfigure}
	\begin{subfigure}[c]{0.48\textwidth}
		\includegraphics[scale=0.28]{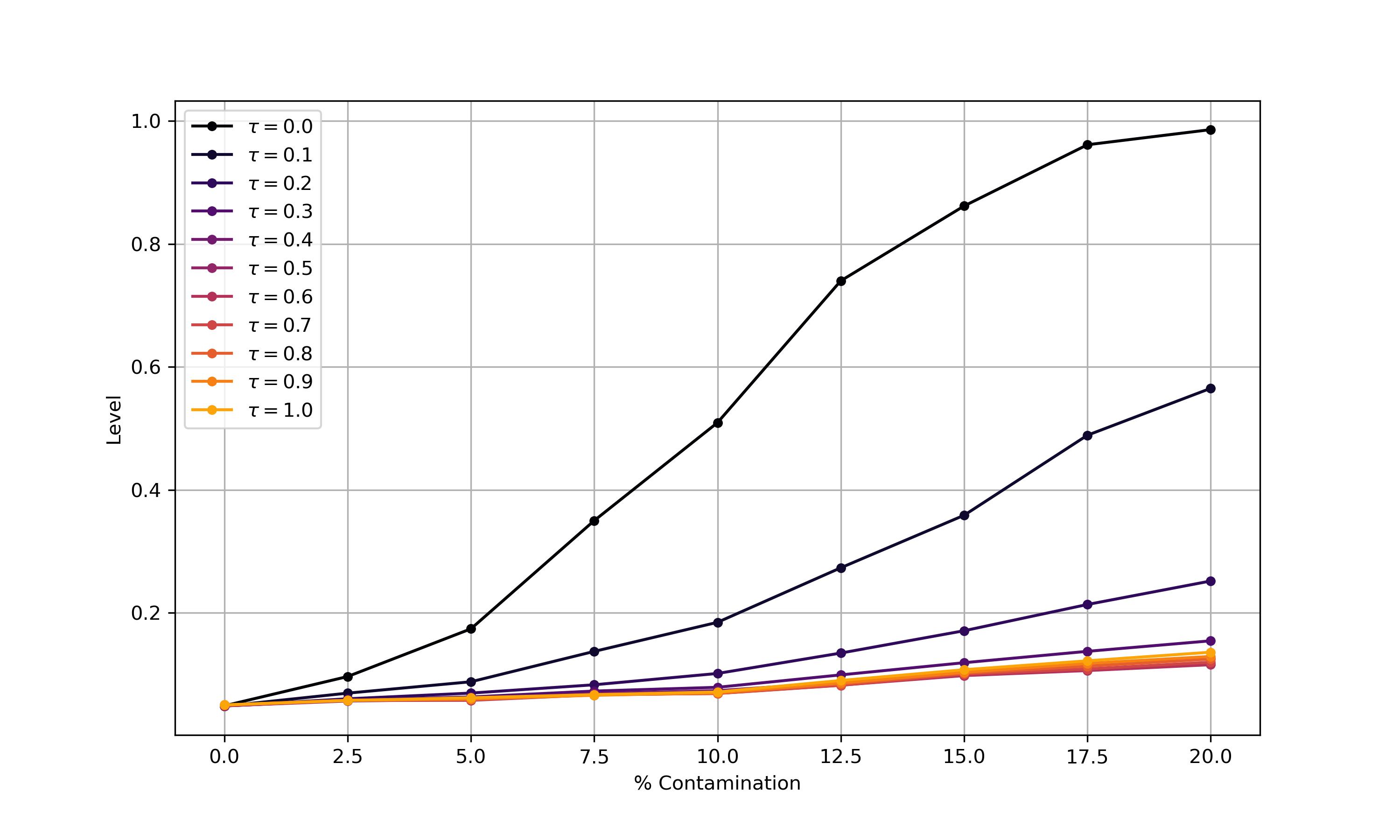}
		\subcaption{$n=100$}
	\end{subfigure}
	\caption{Empirical level of the Wald-type test statistics with $n=50$ (left) and $n=100$ (rigth) under increasing contamination proportion with $\widetilde{\alpha } = 6$ under different values of $\tau$.}
	\label{fig:levelcont6}
\end{figure}

\begin{figure}[htb]
	\begin{subfigure}[c]{0.53\textwidth}
		\includegraphics[scale=0.28]{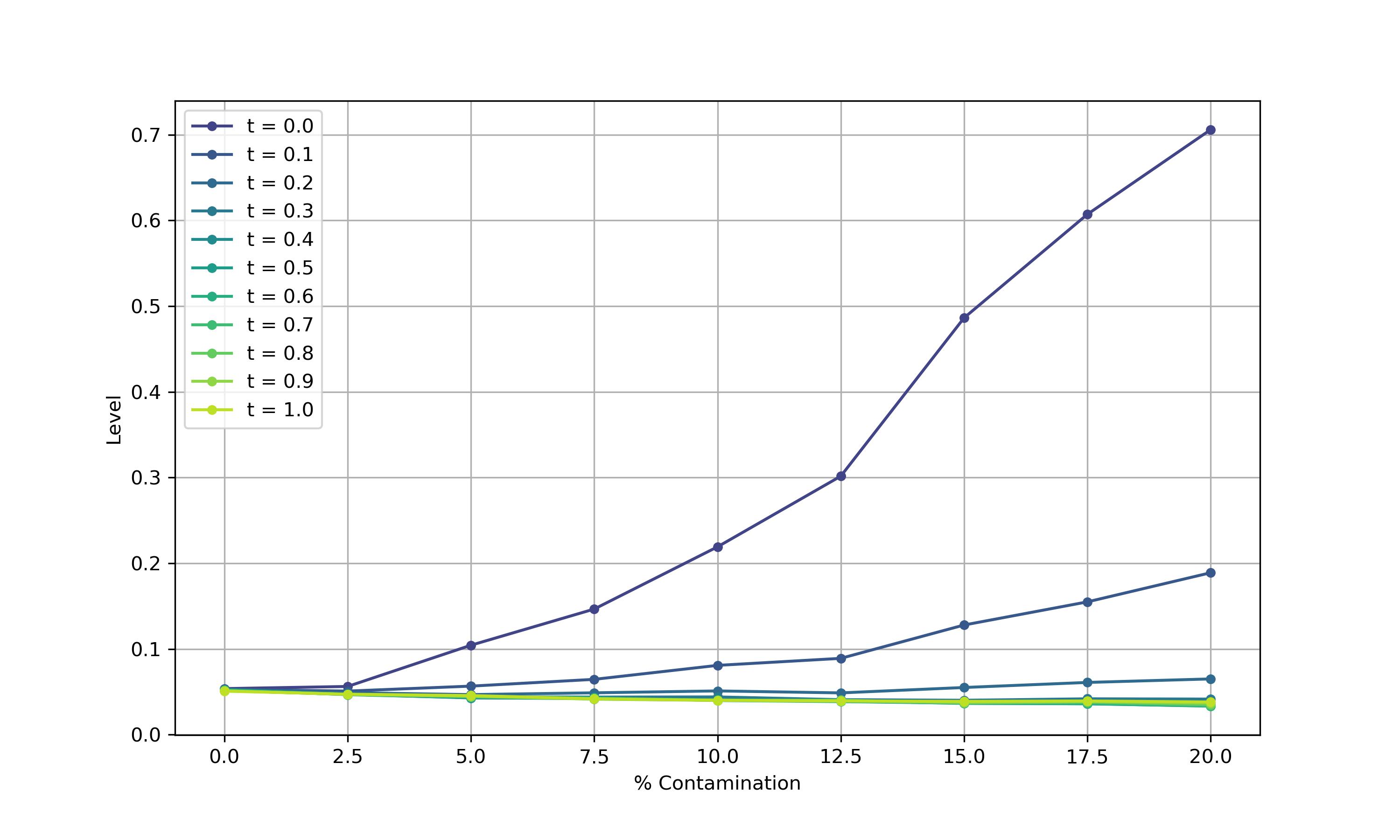}
		\subcaption{$n=50$}
	\end{subfigure}
	\begin{subfigure}[c]{0.48\textwidth}
		\includegraphics[scale=0.28]{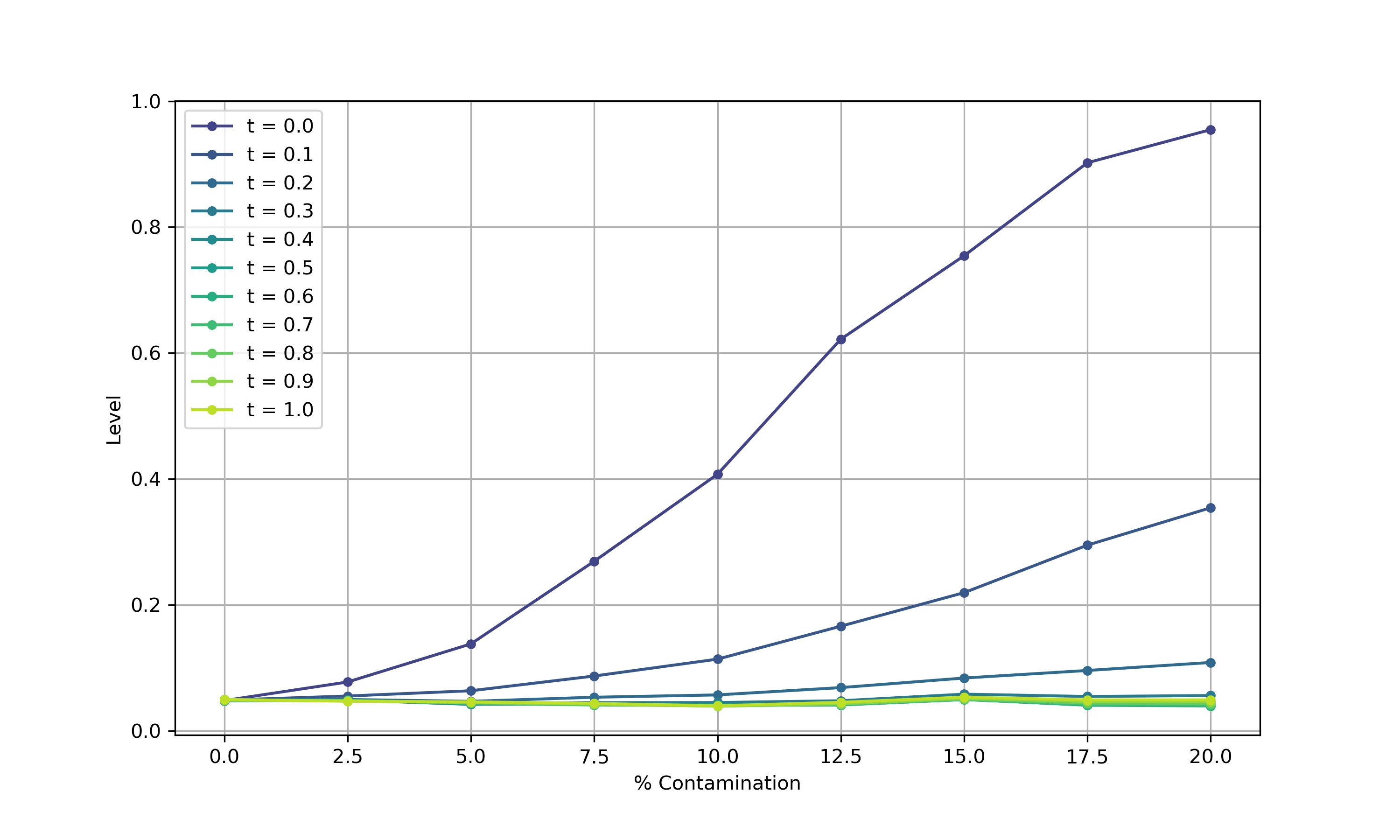}
		\subcaption{$n=100$}
	\end{subfigure}
	\caption{Empirical level of the Rao-type test statistics with $n=50$ (left) and $n=100$ (rigth) under increasing contamination proportion with $\widetilde{\alpha } = 6$ under different values of $\tau$.}
	\label{fig:levelcont6_rao}
\end{figure}

A natural question that may arise is the election of the test statistics family in practice. As previously discussed, there is no universally superior choice for general models, as each family presents its own advantages and disadvantages. Our results indicate that both families exhibit similar behavior in terms in robustness. 

Figure \ref{fig:levelcont_both} plots a comparative performance of both families under increasing contamination with moderate outliers ($\widetilde{\alpha} = 3$, left) and strong outliers ($\widetilde{\alpha} = 6$, right). The sample size is set to $n=100$ in both graphics. The classical Wald and Rao tests ($\tau=0$) perform similarly and get rapidly affected by contamination. Moreover, the Rao family tends to be more conservative, exhibiting lower empirical levels compared to the Wald family. Nonetheless, the robustness of both families is clearly demonstrated.

\begin{figure}[htb]
	\begin{subfigure}[c]{0.53\textwidth}
		\includegraphics[scale=0.28]{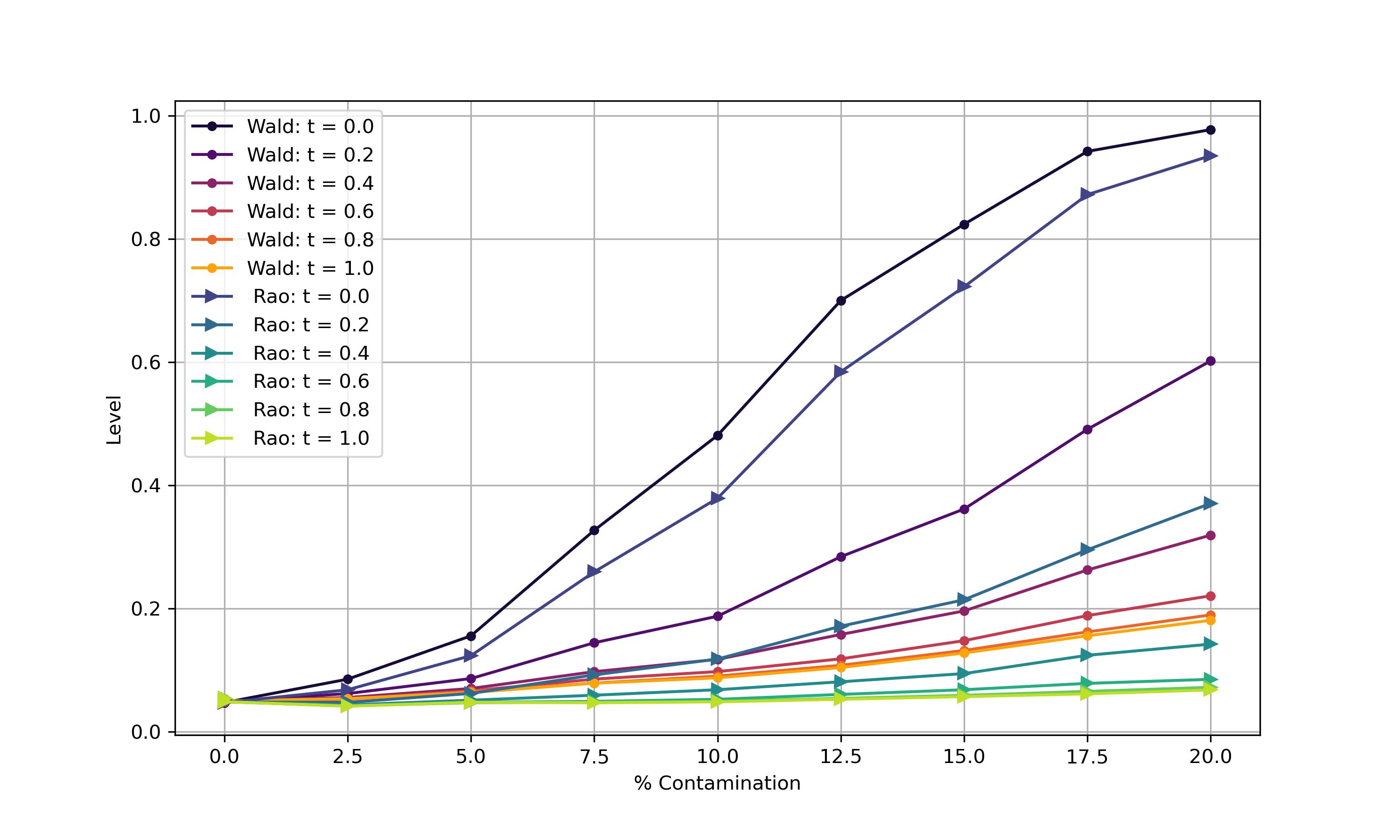}
		\subcaption{$\widetilde{\alpha} = 3$}
	\end{subfigure}
	\begin{subfigure}[c]{0.48\textwidth}
		\includegraphics[scale=0.28]{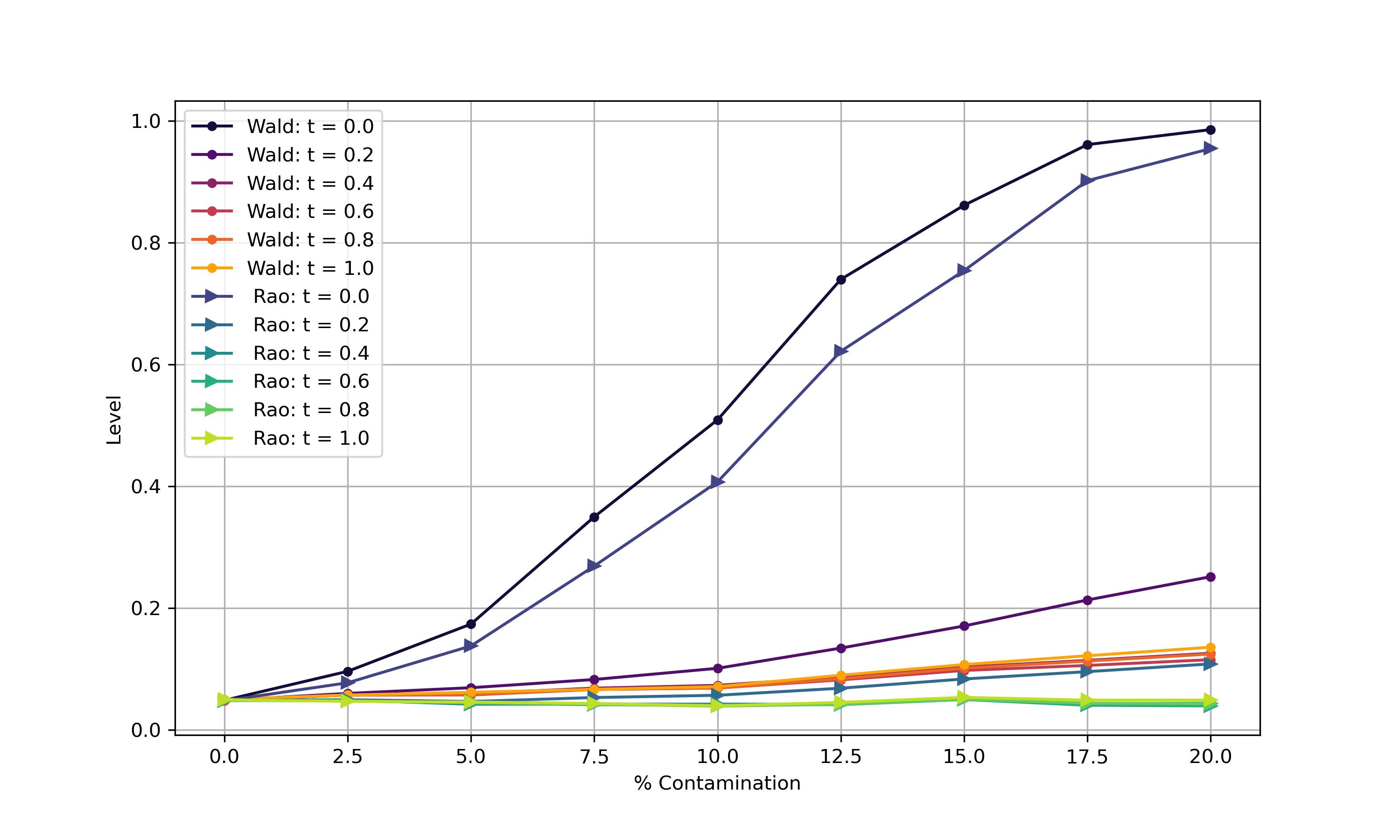}
		\subcaption{$\widetilde{\alpha} = 6$}
	\end{subfigure}
	\caption{Empirical level of both families with $n=50$ (left) and $n=100$ (rigth) under increasing contamination proportion with $\widetilde{\alpha } = 6$ under different values of $\tau$.}
	\label{fig:levelcont_both}
\end{figure}


Next, we study the power of the test we consider again the tests given in (\ref{simplenocontcase1})
Now the underlying distribution is a log-logistic distribution with true parameters $(\alpha, \beta) = (1.15, 5)$ and contaminated scale parameter $\widetilde{\alpha} = 0.5$

Figures \ref{fig:powern} and \ref{fig:powern_rao} present the empirical power of the tests in the absence of contamination (left) and under a $15\%$ of contamination (right) against sample size for a grid of values of the tuning parameter $\tau.$ The results shows the stated consistency of the test in the absence of contamination, as all empirical powers tends to 1 when increasing the sample size. In the right panel, the power of the tests is impacted by the presence of outlying observations, with the effect becoming more pronounced as the value of $\tau$ increases. The role of $\tau$ controlling the compromise between efficiency and robustness is evidenced; the classical Wald test has the best performance in the absence of contamination, but gets heavily affected by data contamination. On the other hand, moderate values of the tuning parameter $\tau$ provide robust test statistics with a small loss in efficiency.

\begin{figure}[htb]
	\begin{subfigure}[c]{0.53\textwidth}
		\includegraphics[scale=0.28]{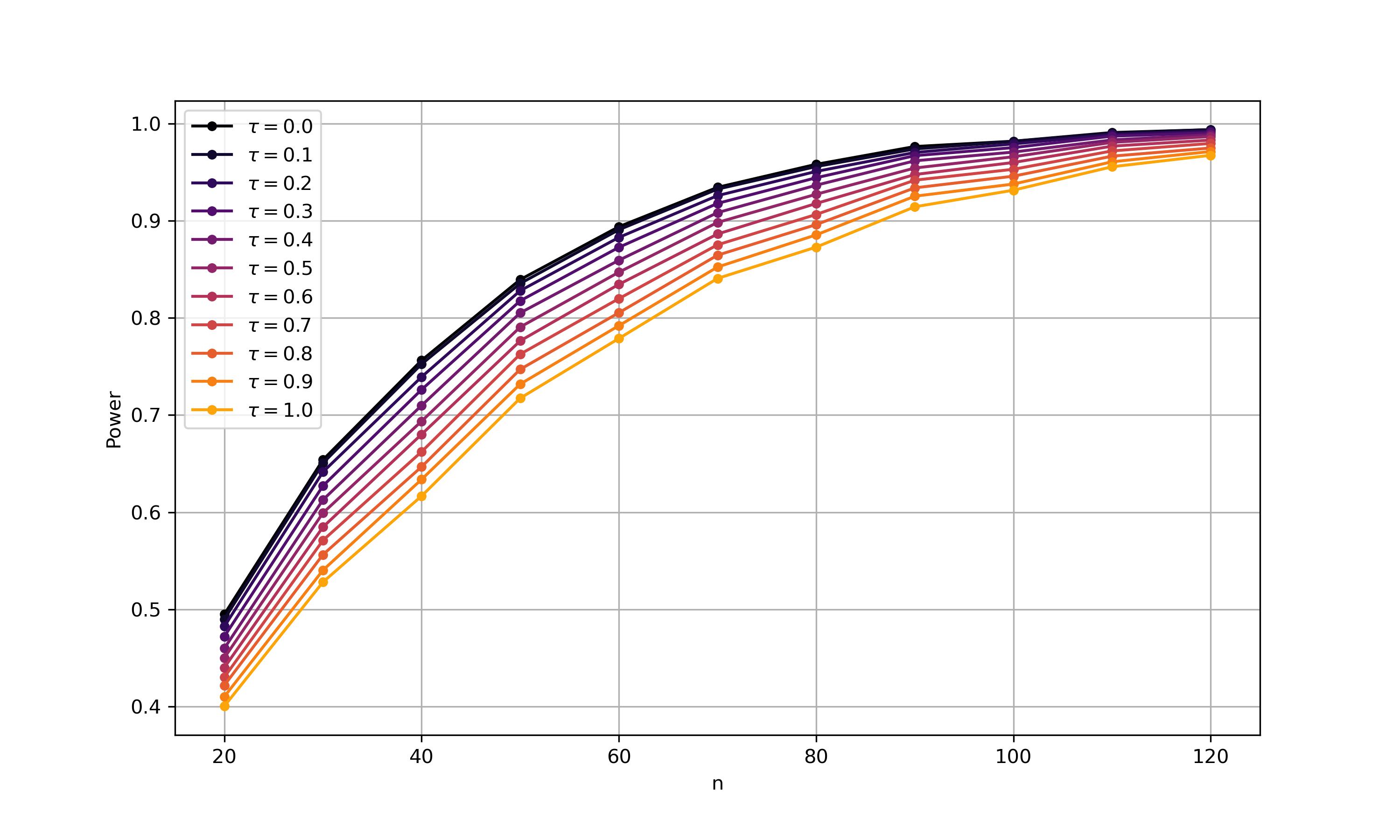}
		\subcaption{No contamination}
	\end{subfigure}
	\begin{subfigure}[c]{0.48\textwidth}
		\includegraphics[scale=0.28]{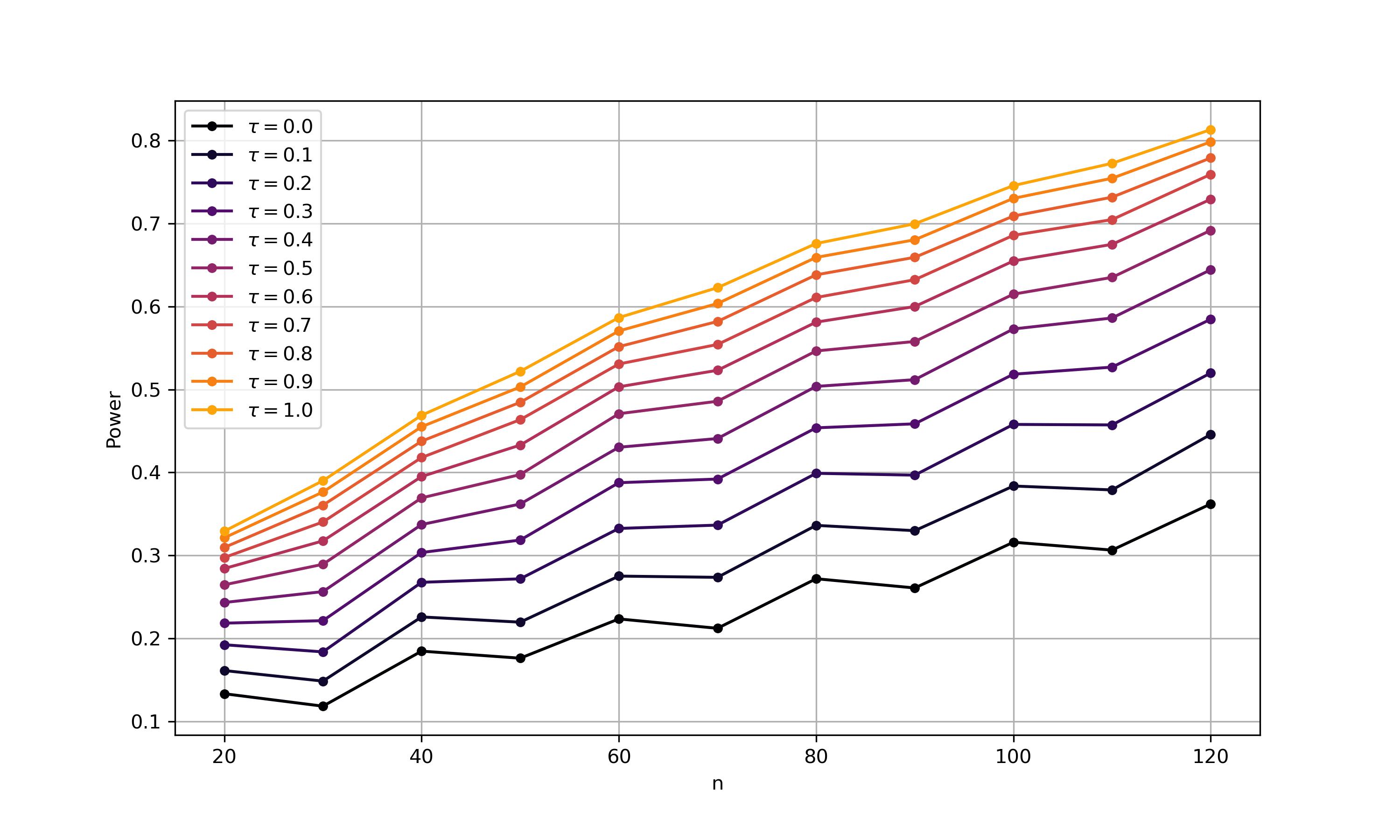}
		\subcaption{$15\%$ contamination}
	\end{subfigure}
	\caption{Empirical power of the Wald-type test statistics under increasing sample size for different values of $\tau$ in the absence of contamination (left) and with a $15\%$ contamination proportion.}
	\label{fig:powern}
\end{figure}

\begin{figure}[htb]
	\begin{subfigure}[c]{0.53\textwidth}
		\includegraphics[scale=0.28]{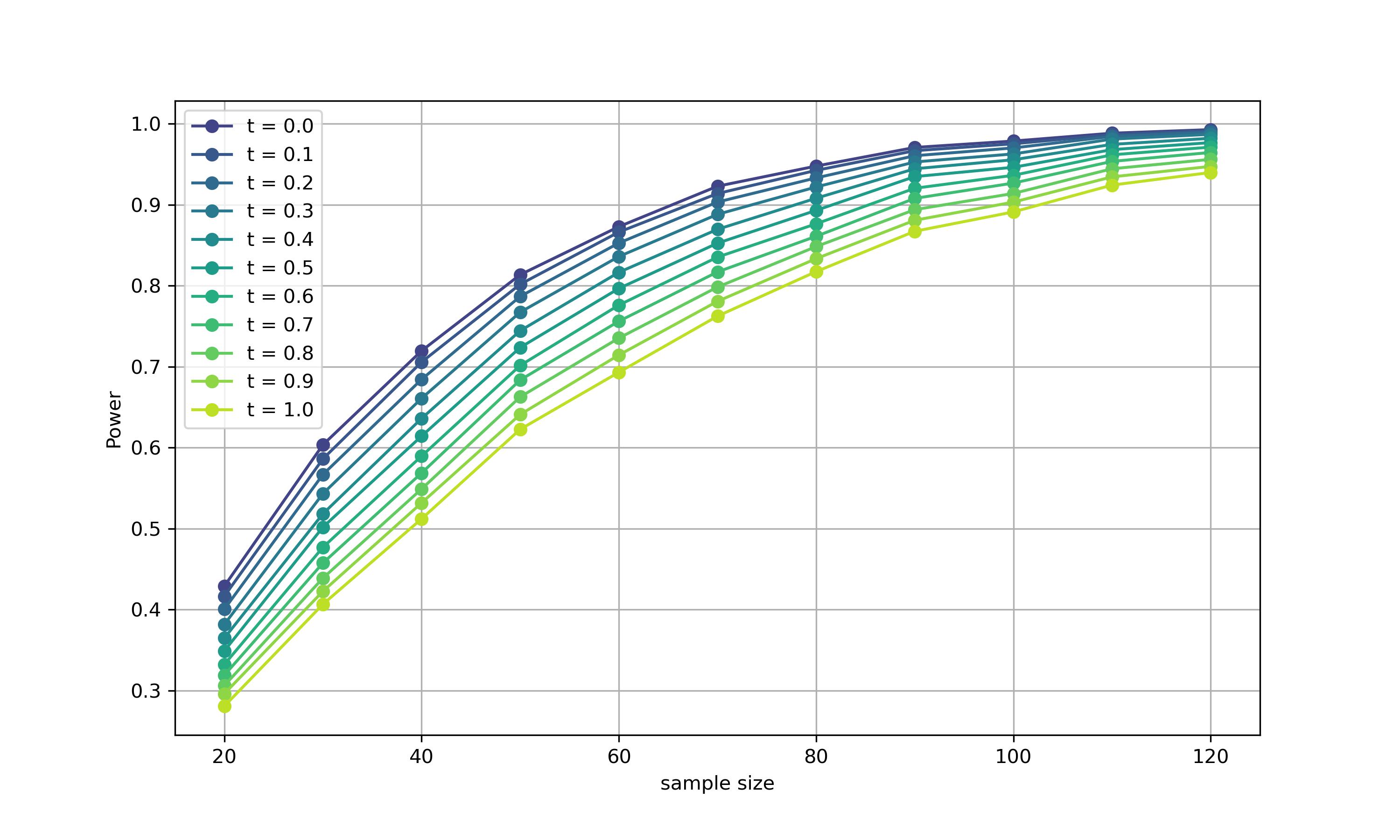}
		\subcaption{No contamination}
	\end{subfigure}
	\begin{subfigure}[c]{0.48\textwidth}
		\includegraphics[scale=0.28]{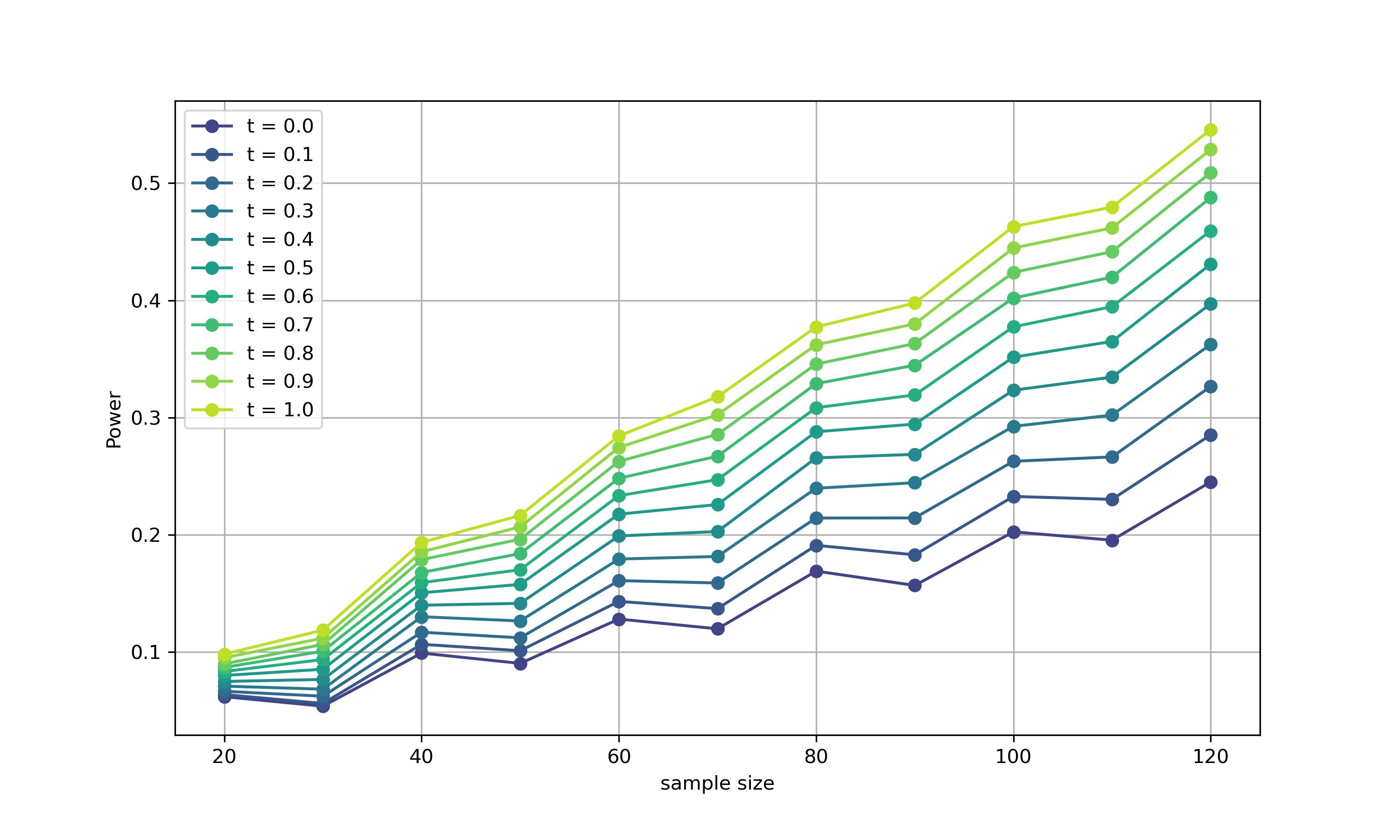}
		\subcaption{$15\%$ contamination}
	\end{subfigure}
	\caption{Empirical power of the Rao-type test statistics under increasing sample size for different values of $\tau$ in the absence of contamination (left) and with a $15\%$ contamination proportion.}
	\label{fig:powern_rao}
\end{figure}

Finally, figures \ref{fig:powercont} and \ref{fig:powercont_rao} represent the effect of contamination on the empirical power of the test for two sample sizes, $n=50$ (left) and $n=100$ (right). The strong impact of contamination on the MLE is particularly notable, as its empirical power drops drastically from nearly 1 in the absence of contamination to values below 0.1 for a contamination proportion near $20\%.$ This indicates that the classical test is highly sensitive to contamination, significantly altering its decisions. In contrast, Wald-type and Rao-type test statistic based on robust MDPDE keeps competitive, rejecting the null hypothesis with a quite high empirical power even under heavy contamination.

\begin{figure}[htb]
	\begin{subfigure}[c]{0.53\textwidth}
		\includegraphics[scale=0.28]{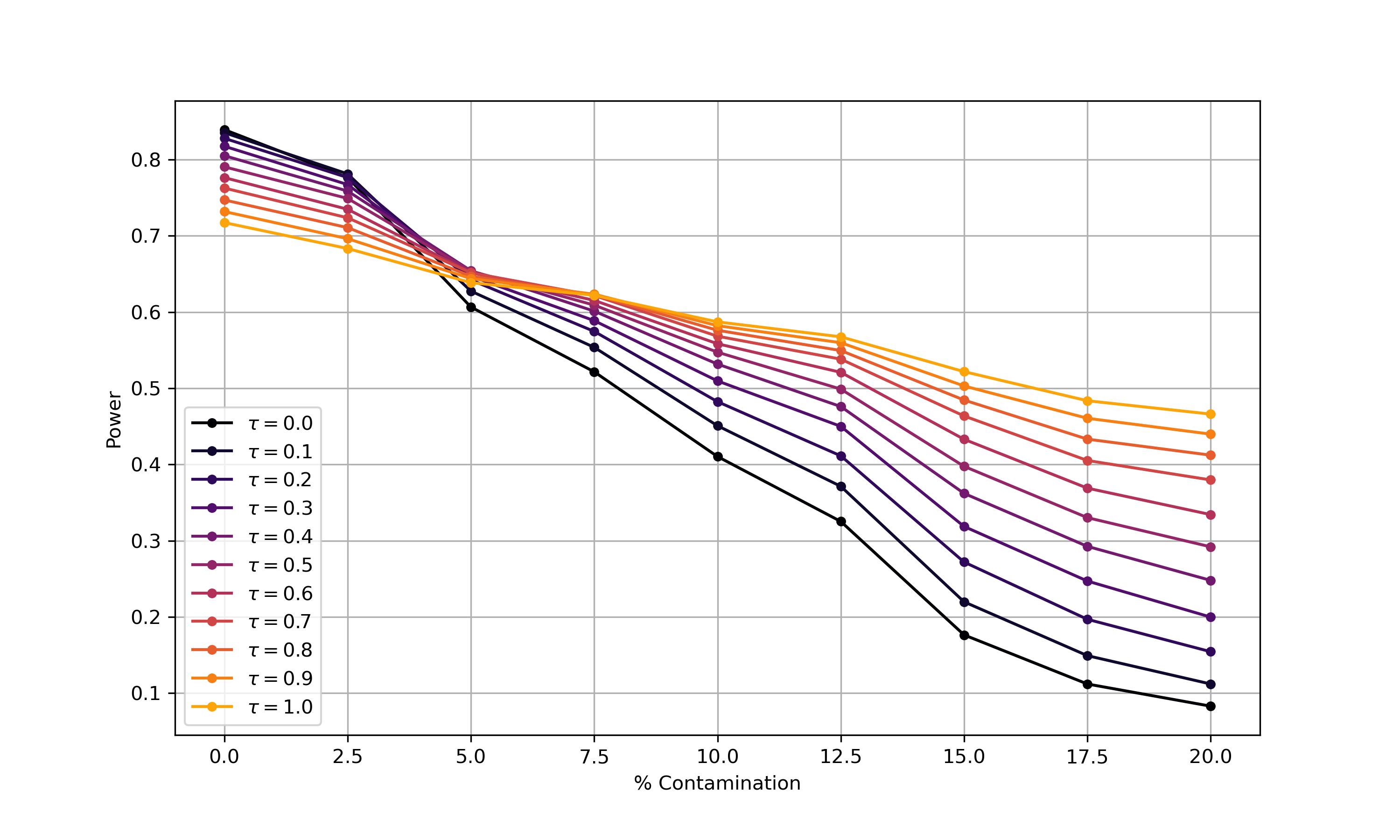}
		\subcaption{$n=50$}
	\end{subfigure}
	\begin{subfigure}[c]{0.48\textwidth}
		\includegraphics[scale=0.28]{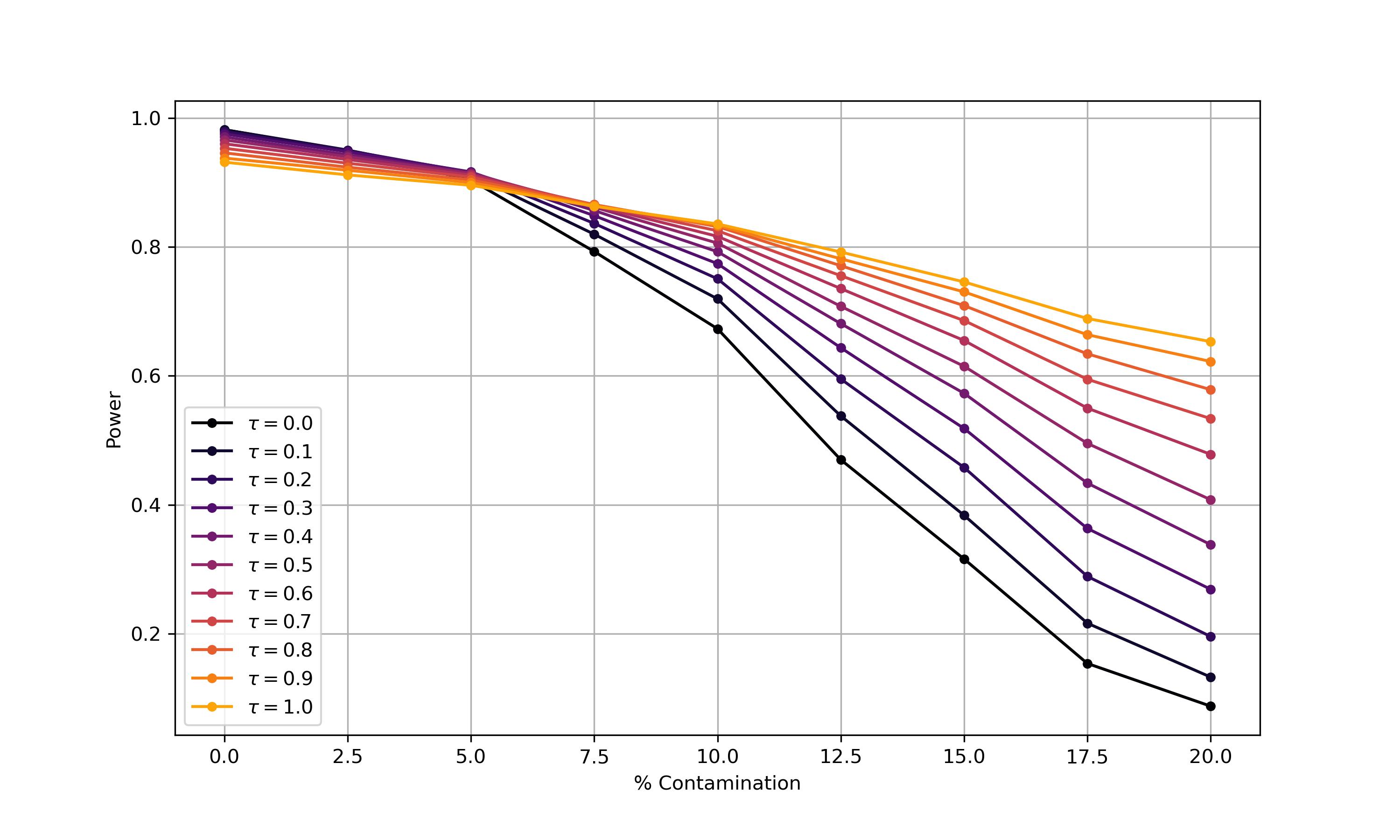}
		\subcaption{$n=100$}
	\end{subfigure}
	\caption{Empirical power of the Wald-type test statistics with $n=50$ (left) and $n=100$ (rigth) under increasing contamination proportion under different values of $\tau$.}
	\label{fig:powercont}
\end{figure}

\begin{figure}[htb]
	\begin{subfigure}[c]{0.53\textwidth}
		\includegraphics[scale=0.28]{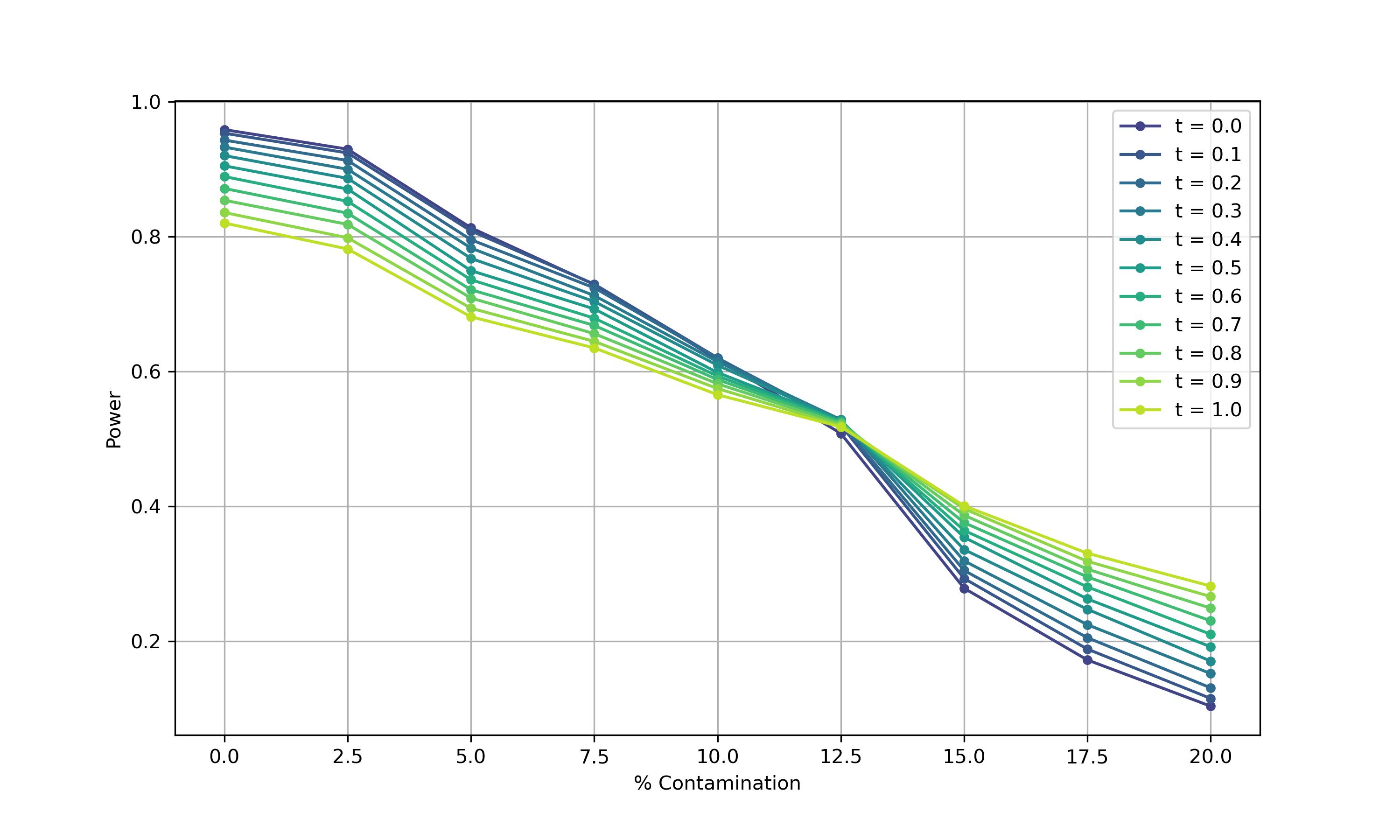}
		\subcaption{$n=50$}
	\end{subfigure}
	\begin{subfigure}[c]{0.48\textwidth}
		\includegraphics[scale=0.28]{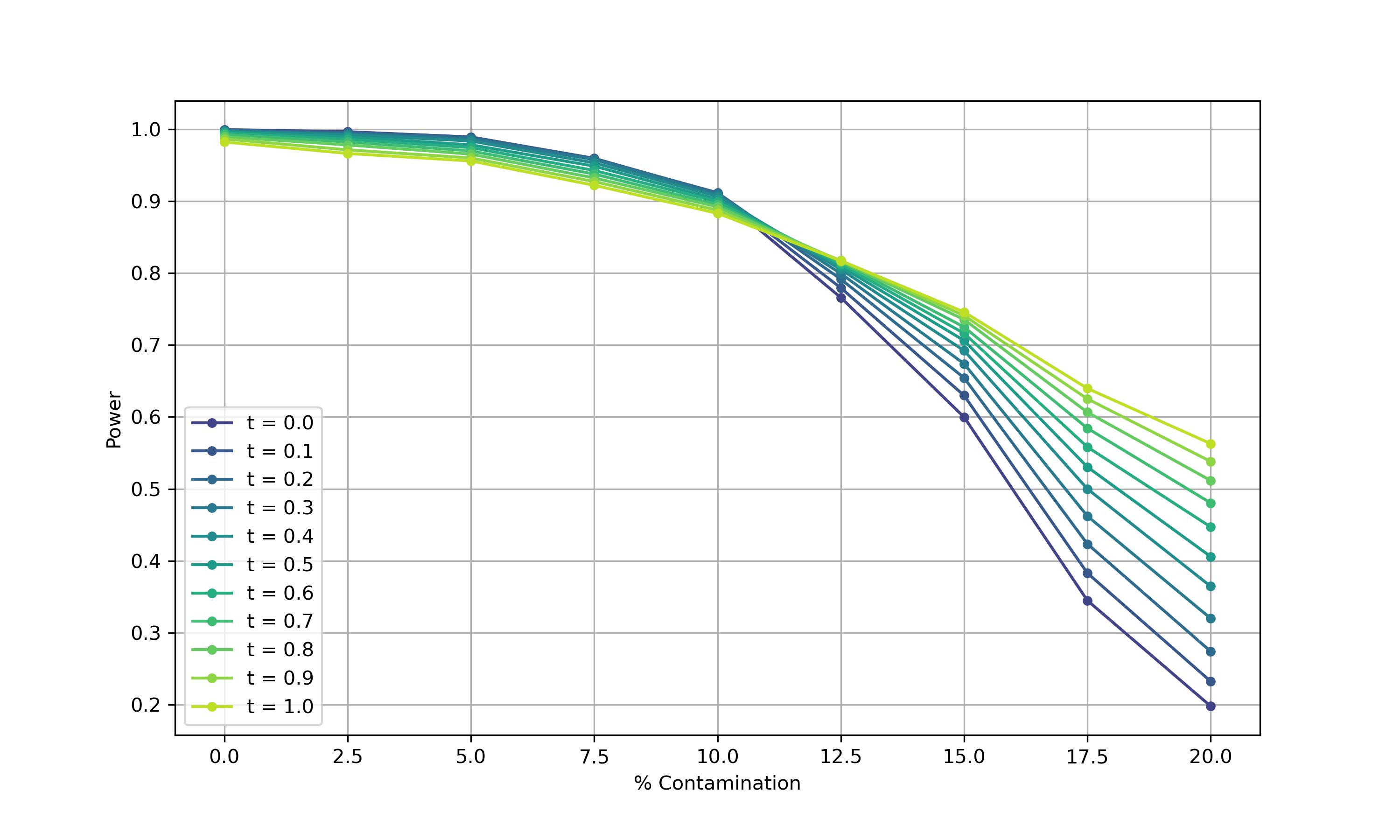}
		\subcaption{$n=100$}
	\end{subfigure}
	\caption{Empirical power of the Rao-type test statistics with $n=50$ (left) and $n=100$ (rigth) under increasing contamination proportion under different values of $\tau$.}
	\label{fig:powercont_rao}
\end{figure}


\section*{Acknowledgements}

This work was supported by the Spanish Grant PID2021-124933NB-I00. The four authors are members of the  Interdisciplinary Mathematics Institute (IMI).

\bibliographystyle{abbrvnat}

\end{document}